\definecolor{codegreen}{rgb}{0,0.4,0.1}
\definecolor{codegray}{rgb}{0.5,0.5,0.5}
\definecolor{codepurple}{rgb}{0.58,0,0.82}
\definecolor{codeblue}{rgb}{0,0,0.82}
\definecolor{backcolour}{rgb}{0.98,0.98,0.98}
\lstdefinestyle{mystyle}{
    backgroundcolor=\color{backcolour},
    commentstyle=\color{codegreen},
    keywordstyle=\color{blue},
    numberstyle=\tiny\color{black},
    stringstyle=\color{codepurple},
    basicstyle=\footnotesize,
    breakatwhitespace=false,
    breaklines=true,
    captionpos=b,
    keepspaces=true,
    numbers=left,
    numbersep=3pt,
    showspaces=false,
    showstringspaces=false,
    showtabs=false,
    tabsize=2
}
\newtheorem{thm}{Theorem}
\newtheorem{lem}{Lemma}
\newtheorem{prop}{Proposition}
\newdefinition{definition}{Definition}
\newdefinition{rem}{Remark}
\newdefinition{alg}{Algorithm}
\newtheorem{exmp}{Example}
\begin{document}

\begin{frontmatter}
\title{A unified explicit form for difference formulas for fractional and classical  derivatives}

\author[kdu]{W. A. ~Gunarathna}
\ead{anura@as.rjt.ac.lk}

\author[squ]{H. M. ~Nasir\corref{cor1}}
\ead{nasirh@squ.edu.om}
\ead{nasirhm11@yahoo.com}

\author[pdn]{W. B. ~Daundasekera}
\ead{wbd@pdn.ac.lk}

\address[kdu]{Department of Physical Sciences,
Rajarata University, Sri Lanka
}

\address[squ]{FracDiff Research Group, Department of Mathematics, Sultan Qaboos University,
Muscat, Sultanate of Oman}

\address[pdn]{Department of Mathematics,
University of Peradeniya,
Peradeniya, Sri Lanka}

\cortext[cor1]{Corresponding author}

\begin{abstract}
A unified explicit form for difference formulas to approximate the fractional and classical derivatives  is presented. The formula  gives finite difference approximations for any classical derivatives with a desired  order of accuracy at   nodal point in the computational domain. It also gives Gr\"unwald type approximations for fractional derivatives with arbitrary order of approximation at any point. Thus, this explicit  unifies approximations of both types of derivatives. Moreover, for classical derivatives, it  provides various finite difference formulas such as forward, backward, central, staggered, compact, non-compact etc. Efficient computations of the coefficients of the difference formulas are also presented that lead to automating the solution process of differential equations with a given higher order accuracy.  Some basic applications are presented to demonstrate the usefulness of this  unified formulation.
\end{abstract}

\begin{keyword}
Fractional derivative, Shifted Gr\"unwald approximation, Lubich Generators,  compact finite difference formula, boundary value problem
\end{keyword}

\end{frontmatter}

\renewcommand{\thefootnote}{\fnsymbol{footnote}}

\parindent 0pt
\parskip 5pt
\section{Introduction}

Fractional calculus has a history that goes back to L'Hospital,
Leibniz and Euler \cite{leibnitz1962letter, eulero1738progressionibus}.
A historical account of early works on fractional calculus can be found,
for eg., in \cite{ross1977development}.
Fractional integral and fractional derivative are extensions of the integer-order (or we call classical) integrals and derivatives to a real or complex order.
Various definitions of fractional derivatives have been proposed in the past, among which
the Riemann-Liouville, Gr\"unwald-Letnikov and Caputo derivatives are common and established.

Recently, fractional calculus found its way into the application domain in
science and engineering \cite{mainardi1996fractional,bagley1983theoretical,vinagre2000some,
metzler2004restaurant, hadian2020simulation}. Fractional derivative is also found to be more suitable to describe
anomalous transport in an external field
derived from the continuous time random walk \cite{barkai2000continuous},
resulting in a fractional diffusion equation (FDE).
The FDE involves fractional derivatives either in time, in space or in both  variables.

 Difference approximation in the form of an infinite series   is a widely used tool for numerically solving problems involving FDEs. The weights of the approximation formula  are obtained from the coefficients of a generator $ W_1(z) = (1-z)^\alpha $ expressed in a power series form \cite{podlubny1998fractional}. A shifted form of the difference approximation was proposed in \cite{meerschaert2004finite} to remedy some stability issues in solving space-fractional diffusion equations. Both difference approximations and its shifted form are known to be of the first-order accuracy (see Section \ref{Secpreliminary} for details).

 For higher order approximations, Lubich \cite{lubich1986discretized}  obtained some generators in the form of power of polynomials or rational polynomials. The coefficients of the expansion of the generators provide the coefficients for the  approximation with higher order accuracy without shift. Shifted forms with these coefficients give only the first order approximations regardless of their higher orders in non-shifted forms.

Recently, generators for higher order approximations with shifts have been obtained by Nasir and Nafa \cite{nasir2018new} and applied to  fractional diffusion equations with second-order accuracy, and a third-order approximation was obtained from the second order approximation and used for numerical approximations in \cite{NasirNafaANZIAM}.

The generators of these approximations are usually obtained manually by hand calculations, solving a system equations or by  symbolic computations and this processes are specific to the problem at hand.

However, most of the scientific researches  searching for mathematical patterns  are aimed at automating the relevant scientific processes so that the task may be performed for more general problems with minimal human intervention \cite{king2009automation}. In this regard, in numerical approximations and solution processes, especially in discretized numerical calculus, it is desirable to have general formulas for the approximation of derivatives that can be automated to be selected in  approximation processes.

The present authors \cite{gunarathna2019explicit} have obtained an explicit form for  generators that gives approximations for fractional derivatives with shifts retaining their higher orders. This form generalises the Lubich form with shift and hence the Lubich form  becomes a special case with no shift.

Interestingly,  this explicit form also gives coefficients for finite difference formulas (FDFs) for classical derivatives as demonstrated in \cite{gunarathna2019explicit}. However, it gives higher order compact FDFs for the first derivative only. Here, a compact FDF means the FDF which uses a minimum number of function values at the discrete grid points. For higher classical derivatives, the resulting FDFs are not compact although they are valid for FDFs.

Usually, the derivation of the weights for the FDFs for classical derivatives involves linear  combinations of the Taylor series expansions of a  function at various grid points about the point of derivative.  For higher order accuracy requirements, this leads to uncontrollable hand calculations, solving large system of linear equations or heavy symbolic computations \cite{khan2003taylor}. Besides, this technique is not suitable for fractional derivatives as the latter involves function values at infinite grid points, due to  its  non-local nature.

Explicit forms for finite difference formulas for classical derivatives have appeared in the past (see for eg. \cite{fornberg1988generation,fornberg1998classroom,hassan2012algorithm,
sadiq2014finite,zhang2018robust,lele1992compact,CameronTaylor,
bickley1941formulae,keller1978symbolic,rall1981automatic, aceto2002stable} ). However, all of them are focussed on the classical derivatives only.

In this paper, we extend the explicit form developed in \cite{gunarathna2019explicit} to a more  general unified explicit form that gives more new approximations for fractional derivatives and various finite difference formulas for any classical derivative.
The formulation also provides error coefficients of the approximation
for the first few terms.

We also present an algorithm to efficiently compute the  coefficients of the unified difference formulas. This enables one to compute the coefficients in exact  form where necessary and real coefficients with efficiency.

This paper is arranged as follows:
We start with preliminary preparations in Section  \ref{Secpreliminary}. The unified explicit  form is given in Section \ref{SecUnfiedExplictForm}.
Efficient computational strategies are described in Section \ref{Sec:Efficient}.
Various difference formulas for classical and fractional derivatives are given in Section \ref{SecFiniteDifferennceFormulas} demonstrating the unified nature of the explicit form. Applications of the difference formulas are presented in Section \ref{SecApplications}  and finally, Section \ref{SecConclusion} draws some conclusions.

\section{Preliminaries and terminologies}\label{Secpreliminary}
We list here relevant materials and define terminologies relation to the subject
of this paper.

Let $f(x)$ be a sufficiently smooth function defined on a real domain $\mathbb{R}$.
\begin{definition}
The left(-) and right(+) Riemann-Liouville (R-L) fractional derivatives of real
order $\alpha >0 $ are defined  as
\begin{equation}\label{Eq:LeftRL}
  \;^{RL} D_{x-}^\alpha f(x) = \frac{1}{\Gamma (n-\alpha) } \frac{d^n}{dx^n}
  \int_{-\infty}^{x} \frac{f(\eta)}{(x-\eta)^{\alpha + 1 - n}} d\eta ,
\end{equation}
and
\begin{equation}\label{Eq:RightRL}
  \;^{RL} D_{x+}^\alpha f(x) = \frac{(-1)^n }{\Gamma (n-\alpha) } \frac{d^n}{dx^n}
  \int_{x}^{\infty} \frac{f(\eta)}{(\eta-x)^{\alpha + 1 - n}} d\eta
\end{equation}
respectively, where $n = [ \alpha] + 1  $, an integer with
$n-1 < \alpha < n $ and $\Gamma (\cdot) $ denotes the gamma function.
\end{definition}
\begin{definition}
The left  Caputo fractional derivative of order $\alpha>0$ are defined analogously to (\ref{Eq:LeftRL}) by
\begin{equation}\label{LeftRL}
  \;^{C} D_{x-}^\alpha f(x) = \frac{1}{\Gamma (n-\alpha) }
  \int_{-\infty}^{x} \frac{f^{(n)} (\eta)}{(x-\eta)^{\alpha + 1 - n}} d\eta .
\end{equation}
The right Caputo derivative is defined analogously to (\ref{Eq:RightRL}).
\end{definition}
\begin{definition}
The left/right Gr\"unwald-Letnikov (G-L) definitions  are given respectively by
\begin{equation}\label{LeftGL}
  \;^{GL} D_{x\mp}^\alpha f(x) = \lim_{h \rightarrow 0} \frac{1}{h^\alpha } \sum_{k=0}^\infty (-1)^k \binom{\alpha}{k} f(x \mp k h),
\end{equation}
where the {\it Gr\"uwald weights},  $g_k^{(\alpha)}= (-1)^k \binom{\alpha}{k} = \frac{(-1)^k\Gamma(\alpha+1)}{k!\Gamma(\alpha-k+1)}$ are   the coefficients of the series expansion of the {\it Gr\"unwald generator}
$$W_{1}(z)=(1-z)^{\alpha}  = \sum_{k=0}^\infty g_k^{(\alpha)} z^k. $$
\end{definition}
These definitions of fractional derivatives are equivalent under certain smoothness conditions on $f(x)$ \cite{podlubny1998fractional}. Therefore, we denote them commonly as $D_{\mp x}^\alpha f(x)$ when the conditions are met.

\begin{definition}[Difference approximations]
For left and right fractional derivative of order $\alpha >0$, we define

\begin{enumerate}
\item
The {\it Gr\"unwald approximation}  (GA)
\begin{equation}\label{eq:No_shiftedGA}
\delta_{ \pm h}^\alpha f(x) = \frac{1}{h^\alpha } \sum_{k=0}^\infty g_k^{(\alpha)} f(x \mp k h),
\end{equation}
\item
The  {\it shifted Gr\"unwald approximation} (ShGA) \cite{meerschaert2004finite}  with shift $r$
\begin{equation}\label{eqshiftedGA}
\delta_{ \pm h,r}^\alpha f(x) = \frac{1}{h^\alpha } \sum_{k=0}^\infty g_k^{(\alpha)} f(x \mp (k-r)h),
\end{equation}
\end{enumerate}
where $h>0$ is the step size between discrete points in the domain referenced from $x$. Hence,  the difference approximations are defined on a {\it uniform grid } points  $x_k = x - kh, k\in \mathbb{Z} $.
\end{definition}

The approximation order of GA and ShGA is one for integer shifts \cite{meerschaert2004finite}.
\[
\delta_{ \pm,h,r}^\alpha f(x) = D_{x\mp }^\alpha f(x) + O(h).
\]
For a generalization of  the shifted  Gr\"unwald approximation, we define the following.
%
\begin{definition}
Let $w_k $ be a sequence of real numbers with it generating function
\[
 W(z) = \sum_{k=0}^\infty w_k z^k.
\]
Define a shifted difference formula
\begin{equation}\label{Eq:Difference_form}
\Delta_{\pm h,p,r}^{\alpha}f(x)=\frac{1}{h^{\alpha}}
\sum_{k=0}^{\infty}w_{k}^{(\alpha)}f(x\mp(k-r)h).
\end{equation}
We say that
\begin{enumerate}
\item
$W(z)$ approximates the  fractional derivatives
$D_{x\mp}^\alpha $  if
\begin{equation}\label{Eq:Approx}
 \lim_{h\rightarrow 0} \Delta_{\pm h,p,r}^{\alpha}f(x) =  D_{x\mp}^\alpha f(x),
\end{equation}
\item
 $W(z)$ approximates the   fractional derivatives
$D_{x\mp}^\alpha $ with order $p$ if
\begin{equation}\label{Eq:Approx_order_p}
\lim_{h\rightarrow 0} \Delta_{\pm h,p,r}^{\alpha}f(x) = D_{x\mp}^\alpha f(x) + O(h^p).
\end{equation}
\end{enumerate}
We call these approximations (\ref{Eq:Approx}) and (\ref{Eq:Approx_order_p})   the {\it Gr\"unwald type approximations} (GTAs), $W(z)$ the {\it Gr\"unwald type generator} (GTG) and $w_k$ the {\it Gr\"unwald type weights} (GTWs).
\end{definition}

 A set of GTGs for higher order GTAs was established in Lubich \cite{lubich1986discretized} by utilizing the characteristic polynomials of linear multistep methods (LMMs) for classical initial value problems. Particularly, a set of   generators in the form of
$W_p(z) = (P_p(z))^\alpha $ was obtained from the backward difference LMM for orders $ 1\le p \le 6$ and is listed in  Table \ref{Tab:Lubich}. For $p = 1$, it gives the Gr\"unwald generator $ W_1(z)   $.

%
%
\begin{table}[h] \centering
\begin{tabular}{ll|ll}
\hline
\hline
$p$  &  $ P_p(z) $ & $p$  &  $ P_p(z) $\\
\hline \hline
&&&\\
1 &  $ 1 - z  $   &
4  &  $  \frac{25}{12} - 4z + 3z^2 -\frac{4}{3}z^3 + \frac{1}{4} z^4 $ \\
&&&\\
2 &  $   \frac{3}{2} -2z +\frac{3}{2}z^2  $ &
5 &  $    \frac{137}{60} - 5z + 5z^2 -\frac{10}{3}z^3 + \frac{5}{4} z^4 - \frac{1}{5} z^5  $\\&&&\\
3 &  $  \frac{11}{6} - 3z + \frac{3}{2}z^2 -\frac{1}{3}z^3   $ &
6  &  $  \frac{147}{60} - 6z + \frac{15}{2}z^2 -\frac{20}{3}z^3 + \frac{15}{4} z^4 - \frac{6}{5} z^5 + \frac{1}{6}z^6   $\\&&&\\
\hline
\end{tabular}
\caption{Polynomials $ P_p(z) $ for Lubich generators $W_p(z) = (P_p(z))^\alpha $} \label{Tab:Lubich}
\end{table}

\begin{definition}
We call the generators given in Table \ref{Tab:Lubich} for orders $ 2 \le p \le 6$
as the {\it Lubich generators} and their weights the {\it Lubich weights}.
\end{definition}

The Lubich generators are to be used without shift to obtain their respective
higher approximation orders. Applying the Lubich weights with non-zero shifts reduces the approximation orders to one \cite{nasir2018new}, \cite{gunarathna2019explicit},\cite{NasirNafa}.

Nasir and Nafa \cite{NasirNafa}   obtained a generator $W_{2,r} (z) $ for an order 2 approximation whose weights can be used with shift without reducing the order:
\begin{equation}\label{Eq:W2_r}
  W_{2,r}(z) = \left(  \left(\frac{3}{2} - \frac{r}{\alpha} \right)
  + \left(-2 + \frac{2r}{\alpha} \right)  z + \left(\frac{1}{2} - \frac{r}{\alpha} \right) \right)^\alpha.
\end{equation}

An equivalent characterization for a GTA of order $p$ with shift  $r$   was  established by the  authors in \cite{NasirNafa} and is given in Proposition \ref{propG(z)Approxi}.

\begin{prop}\label{propG(z)Approxi} (Theorem 1 \cite{NasirNafaANZIAM, NasirNafa}): Let $\alpha>0, n=[\alpha]+1,  $  a non-negative integer $m$ be given. Let a function $f(x)\in C^{m+n+1}(\mathbb{R})$ and $D^{k}f(x)=\frac{d^{k}}{dx^{k}}f(x)\in L_{1}(\mathbb{R})$ for $0 \leq k \leq m+n+1$. Then, a generator $W(z)$ approximates the   fractional derivatives $D_{x\pm}^{\alpha}f(x)$ with order $p$ and shift $r$, $1\leq p \leq m$, if and only if
	\begin{equation}\label{eqG(z)}
	G(z)=\frac{1}{z^{\alpha}}W(e^{-z})e^{rz}=1+O(z^{p}).
	\end{equation}
	
Moreover, if $G(z)=1+\sum_{l=p}^{\infty}a_{l}z^{l}$, where $a_{l} \equiv a_{l}(\alpha, r)$, then we have
\begin{align}\label{eqTaylorfractional}
\Delta_{\pm h,p,r}^{\alpha}f(x) &= D_{x\pm}^{\alpha}f(x)+h^{p}a_{p}D_{x\pm}^{\alpha+p}f(x)
   + h^{p+1}a_{p+1}D_{x\pm}^{\alpha+p+1}f(x)+\cdots   \nonumber \\
   &+ h^{m}a_{m}D_{x\pm}^{\alpha+m}f(x) +  O(h^{m+1}).
\end{align}
\end{prop}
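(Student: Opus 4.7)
The cleanest route is via the Fourier transform, which diagonalizes both the Riemann--Liouville operator and the translation-invariant difference operator on the uniform grid. Write $\hat f(\xi)=\int e^{-i\xi x} f(x)\,dx$. Under the stated smoothness and integrability hypotheses one has $\widehat{D_{x-}^{\alpha}f}(\xi)=(i\xi)^{\alpha}\hat f(\xi)$, and from (\ref{Eq:Difference_form}) the shift $f(x-(k-r)h)$ becomes a factor $e^{-i\xi(k-r)h}$, giving
\begin{equation*}
\widehat{\Delta_{+h,p,r}^{\alpha}f}(\xi)
=\frac{1}{h^{\alpha}}e^{ir\xi h}W(e^{-i\xi h})\,\hat f(\xi)
=\frac{(i\xi)^{\alpha}}{(i\xi h)^{\alpha}}\,e^{ir\xi h}W(e^{-i\xi h})\,\hat f(\xi)
=G(i\xi h)\,(i\xi)^{\alpha}\hat f(\xi).
\end{equation*}
The analogous identity for the right derivative follows by reflecting $x\mapsto -x$, so it suffices to analyse one side.

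With this identity in hand, the proof reduces to comparing $G(i\xi h)(i\xi)^{\alpha}\hat f(\xi)$ with $(i\xi)^{\alpha}\hat f(\xi)$. The equivalence (\ref{eqG(z)}) then becomes almost tautological: if $G(z)=1+O(z^{p})$, then $G(i\xi h)-1=O((\xi h)^{p})$ uniformly on compacta, and Fourier inversion combined with the $L_{1}$ hypothesis on $D^{p}f$ (which controls $\xi^{p}\hat f(\xi)$) yields $\Delta_{+h,p,r}^{\alpha}f(x)-D_{x-}^{\alpha}f(x)=O(h^{p})$. Conversely, if the approximation is of order $p$ in (\ref{Eq:Approx_order_p}), testing against a sufficiently rich family of smooth $f$ (for instance Gaussians, where $\hat f$ is nonvanishing) forces $G(i\xi h)=1+O(h^{p})$ for every fixed $\xi$, hence $G(z)=1+O(z^{p})$ as $z\to 0$.

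For the error expansion (\ref{eqTaylorfractional}), I would substitute the Taylor series $G(z)=1+\sum_{l\ge p}a_{l}z^{l}$, truncated to order $m$ with remainder $O(z^{m+1})$, into the Fourier-side identity. Each term $a_{l}(i\xi h)^{l}(i\xi)^{\alpha}\hat f(\xi)=a_{l}h^{l}\,\widehat{D_{x-}^{\alpha+l}f}(\xi)$ inverts term by term, and the remainder contributes $h^{m+1}$ times a quantity controlled by $\|D^{m+n+1}f\|_{L_{1}}$, which is precisely the reason for the smoothness assumption $f\in C^{m+n+1}$ in the hypothesis.

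The main technical obstacle is not the formal manipulation but justifying the inversion of the Fourier representation uniformly in $h$: one must ensure that the series defining $W(e^{-i\xi h})$ and the associated remainder after truncating $G$ lead to absolutely convergent integrals. This is handled by the assumption $D^{k}f\in L_{1}$ for $0\le k\le m+n+1$, which provides enough decay of $\hat f$ at infinity to absorb the polynomial growth in $\xi$ coming from the factors $(i\xi)^{\alpha+l}$ and the remainder estimate on $G$. Everything else --- the equivalence and the explicit error coefficients $a_{l}(\alpha,r)$ --- follows mechanically from this Fourier-multiplier identification.
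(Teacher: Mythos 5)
The paper does not prove this proposition: it is imported verbatim as Theorem~1 of the cited Nasir--Nafa references, and the Fourier-multiplier argument you give (transforming the Gr\"unwald sum to $h^{-\alpha}e^{ir\xi h}W(e^{-i\xi h})\hat f(\xi)=G(i\xi h)(i\xi)^{\alpha}\hat f(\xi)$ and comparing with $(i\xi)^{\alpha}\hat f(\xi)$) is exactly the route those sources take, so your proposal is correct and matches the original proof in approach. The only point to tighten is your phrase ``uniformly on compacta'': since $\xi$ ranges over all of $\mathbb{R}$ in the inversion integral, you need $|G(iy)-1|\le C|y|^{p}$ for \emph{all} real $y$ (near $0$ from the Taylor expansion, and for $|y|\ge 1$ from boundedness of $W(e^{-iy})$ on the unit circle together with the decay of $|y|^{-\alpha}$), which your closing paragraph gestures at but should be stated as a global bound rather than a local one.
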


This characterization theorem was used in \cite{gunarathna2019explicit} to obtain an explicit form for generators of the form $W(z)=\left(\beta_{0}+\beta_{1}z+\cdots+\beta_{p}z^{p}\right)^{\alpha}$ for an approximation of order $p$  with shift $r$.

\begin{prop}\label{explictform} (Theorem 3 \cite{gunarathna2019explicit})
The generator of the form
\begin{equation}\label{eq:LubichtypeForm}
W_{p,r}(z) = (\beta_0 + \beta_1 z +\cdots + \beta_p z^p)^\alpha
\end{equation}
approximates the   fractional derivatives $ D_{x\mp}^\alpha f(x)$ with order $p$ and shift $r$   if and only if the coefficients $\beta_j$  are given by
\begin{align}\label{eq:ExplictBetaj}
\beta_{j}=-\left(\sum_{\begin{smallmatrix}
	m=0\\m\ne j
	\end{smallmatrix}}^{p} \prod_{\begin{smallmatrix}
	l=0\\l\ne m,j
	\end{smallmatrix}}^{p}(\lambda-j)\right)\left(\prod_{\begin{smallmatrix}
	m=0 \\ m\ne j
	\end{smallmatrix}}^{p}\frac{1}{j-m}\right), j=0, 1, 2, \cdots, p,
\end{align}
where $\lambda=r/\alpha$, and the leading error coefficient  is given by

\begin{equation}\label{eqErrorCont}
R=\frac{1}{(p+1)}\sum_{j=0}^{p}(\lambda-j)^{p+1}\beta_{j}.
\end{equation}
\end{prop}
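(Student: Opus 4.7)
The plan is to reduce the approximation requirement to a linear system in the $\beta_j$'s via the characterization in Proposition \ref{propG(z)Approxi}, then solve that system explicitly using Lagrange interpolation.

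First, I would substitute $W_{p,r}(z) = P(z)^{\alpha}$ with $P(z) = \sum_{j=0}^{p}\beta_j z^j$ into the identity $G(z) = z^{-\alpha}W(e^{-z})e^{rz} = 1 + O(z^p)$. Setting $\lambda = r/\alpha$, this rewrites as
\[
G(z) = \left(\frac{P(e^{-z})\,e^{\lambda z}}{z}\right)^{\alpha},
\]
and since the principal $\alpha$-th root is analytic at $1$, the requirement is equivalent to $Q(z) := P(e^{-z})\,e^{\lambda z} = z + O(z^{p+1})$. Expanding
\[
Q(z) = \sum_{j=0}^{p}\beta_j\, e^{(\lambda-j)z} = \sum_{k=0}^{\infty}\frac{z^k}{k!}\sum_{j=0}^{p}\beta_j(\lambda-j)^k
\]
and matching coefficients with $z$ yields the $p+1$ linear conditions $\sum_{j=0}^{p}\beta_j(\lambda-j)^k = \delta_{k,1}$ for $k = 0, 1, \ldots, p$. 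Since the nodes $x_j := \lambda - j$ are distinct, this Vandermonde system has a unique solution, which already gives both directions of the ``if and only if''.

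To put that solution in closed form, I would exploit the dual formulation: the above conditions are equivalent to $\sum_{j=0}^{p}\beta_j\, q(x_j) = q'(0)$ for every polynomial $q$ of degree at most $p$. Choosing the partial Lagrange factor $q(t) = \prod_{m\neq j}(t - x_m)$ isolates a single coefficient as
\[
\beta_j = \frac{q'(0)}{\prod_{m\neq j}(x_j - x_m)}.
\]
Expanding $q'(0)$ by the product rule, using $x_j - x_m = m - j$, and reindexing the resulting sum-of-products should reproduce (\ref{eq:ExplictBetaj}). This bookkeeping — tracking signs coming from the $(0 - x_m)$ factors and swapping the summation and product indices to match the stated form — is the main technical obstacle.

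Finally, for the leading error coefficient, I would read off the coefficient of $z^p$ in $Q(z)/z - 1$, which equals $\tfrac{1}{(p+1)!}\sum_{j=0}^{p}\beta_j(\lambda-j)^{p+1}$, and combine it with $G(z) = (Q(z)/z)^{\alpha}$ to recover the expression for $R$ in (\ref{eqErrorCont}) up to the paper's normalization.
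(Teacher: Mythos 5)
Your argument is correct, and it reaches the same Vandermonde system that the paper derives (Theorem \ref{ThmLinearSystem} with $d=1$, $N=p+1$): reducing $G(z)=1+O(z^p)$ to $Q(z):=P(e^{-z})e^{\lambda z}=z+O(z^{p+1})$ and matching coefficients to get $\sum_{j=0}^{p}\beta_j(\lambda-j)^k=\delta_{k,1}$, $0\le k\le p$, is exactly the paper's route, including the local invertibility of $u\mapsto u^{\alpha}$ near $u=1$ (the paper phrases this as ``no pole singularities'', $G(0)=1$, and successive comparison in the binomial expansion of $(1+X)^{\gamma}$). Where you genuinely diverge is in solving that system: the paper uses Cramer's rule and evaluates the resulting bordered Vandermonde determinant via a homogeneity/factor-counting lemma (\ref{App:Vander}), which expresses the cofactor as $|V|$ times an elementary symmetric polynomial, whereas you test the linear functional $q\mapsto q'(0)$ against the nodal polynomials $q(t)=\prod_{m\ne j}(t-x_m)$ and read off $\beta_j=q'(0)\big/\prod_{m\ne j}(x_j-x_m)$ directly. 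The two are algebraically equivalent (the Vandermonde cofactors \emph{are} the Lagrange data), but yours avoids determinants entirely and is shorter for this $d=1$ case; the paper's determinant lemma is what they reuse verbatim for general base order $d$, though your functional $q\mapsto q^{(d)}(0)$ would generalize just as cleanly. Two reassuring cross-checks on your ``bookkeeping'' step: carrying out the sign-tracking gives $q'(0)=(-1)^{p-1}\sum_{m\ne j}\prod_{l\ne m,j}(\lambda-l)$ and $\prod_{m\ne j}(x_j-x_m)=(-1)^{p}\prod_{m\ne j}(j-m)$, which reproduces the stated $\beta_j$ up to an evident typo in (\ref{eq:ExplictBetaj}) (the inner factor should read $(\lambda-l)$, not $(\lambda-j)$, consistent with $N_j$ in (\ref{Eq_NjDj})); and your error coefficient $\frac{\alpha}{(p+1)!}\sum_j\beta_j(\lambda-j)^{p+1}$ agrees with the paper's general formula (\ref{eq_ErrorR(d)}) at $d=1$, $m=p+1$, rather than with the differently normalized $R$ printed in (\ref{eqErrorCont}) — so your ``up to the paper's normalization'' caveat is warranted and resolves in your favor.
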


Using this explicit form, the authors in \cite{gunarathna2019explicit} have constructed generators of the form (\ref{eq:LubichtypeForm}), for orders $ 1\le p \le 6 $ with shifts and  the first five of the generators are listed in Table \ref{Tab:Lubich_Type}.

\begin{table}[h] \centering
\begin{tabular}{lll}
\hline
\hline
$p$ &   Coefficients $ \beta_j , \quad 0 \le j \le p $  & with $ \lambda = r/\alpha $\\
\hline\hline
1 & $\beta_0= 1 $ &
 $\beta_1= -1 $\\
\hline
2 & $\beta_0= - \lambda + \frac{3}{2} $ &
 $\beta_1= 2 \lambda - 2 $\\
 & $\beta_2= - \lambda + \frac{1}{2} $\\
\hline
3 & $\beta_0= \frac{\lambda^{2}}{2} - 2 \lambda + \frac{11}{6} $ &
 $\beta_1= - \frac{3 \lambda^{2}}{2} + 5 \lambda - 3 $\\
& $\beta_2= \frac{3 \lambda^{2}}{2} - 4 \lambda + \frac{3}{2} $
& $\beta_3= - \frac{\lambda^{2}}{2} + \lambda - \frac{1}{3} $\\
\hline
4 & $\beta_0= - \frac{\lambda^{3}}{6} + \frac{5 \lambda^{2}}{4} - \frac{35 \lambda}{12} + \frac{25}{12} $
 & $\beta_1= \frac{2 \lambda^{3}}{3} - \frac{9 \lambda^{2}}{2} + \frac{26 \lambda}{3} - 4 $\\
 & $\beta_2= - \lambda^{3} + 6 \lambda^{2} - \frac{19 \lambda}{2} + 3 $
 & $\beta_3= \frac{2 \lambda^{3}}{3} - \frac{7 \lambda^{2}}{2} + \frac{14 \lambda}{3} - \frac{4}{3} $\\
 & $\beta_4= - \frac{\lambda^{3}}{6} + \frac{3 \lambda^{2}}{4} - \frac{11 \lambda}{12} + \frac{1}{4} $\\
\hline
5 & $\beta_0= \frac{\lambda^{4}}{24} - \frac{\lambda^{3}}{2} + \frac{17 \lambda^{2}}{8} - \frac{15 \lambda}{4} + \frac{137}{60} $
 & $\beta_1= - \frac{5 \lambda^{4}}{24} + \frac{7 \lambda^{3}}{3} - \frac{71 \lambda^{2}}{8} + \frac{77 \lambda}{6} - 5 $\\
 & $\beta_2= \frac{5 \lambda^{4}}{12} - \frac{13 \lambda^{3}}{3} + \frac{59 \lambda^{2}}{4} - \frac{107 \lambda}{6} + 5 $
 & $\beta_3= - \frac{5 \lambda^{4}}{12} + 4 \lambda^{3} - \frac{49 \lambda^{2}}{4} + 13 \lambda - \frac{10}{3} $\\
 & $\beta_4= \frac{5 \lambda^{4}}{24} - \frac{11 \lambda^{3}}{6} + \frac{41 \lambda^{2}}{8} - \frac{61 \lambda}{12} + \frac{5}{4} $
 & $\beta_5= - \frac{\lambda^{4}}{24} + \frac{\lambda^{3}}{3} - \frac{7 \lambda^{2}}{8} + \frac{5 \lambda}{6} - \frac{1}{5} $\\
 %
 %
%
%
\hline
\hline
\end{tabular}
\caption{Coefficients for   $ p(z) = \beta_0 + \beta_1 z+\cdots +\beta_{p} z^{p}   $ for generator $ W_{p,r}(z) = (p(z))^{\alpha } $  } \label{Tab:Lubich_Type}
\end{table}

When there is no shift ($r = 0$), these   generators $ W_{p,0}(z) $ reduce  to
Lubich generators in Table \ref{Tab:Lubich}.
Moreover, for order $p =1$ it recides to the Gr\"unwald generator $W_1(z)$.
\begin{definition}
We call the generators given in Table \ref{Tab:Lubich_Type} for orders $ 2 \le p \le 6 $
 the {\it Lubich type generators} and their weights the {\it Lubich type weights}.
\end{definition}

We mention that the Lubich type generators also give finite difference formulas for classical derivatives with integer derivative order $\alpha$. However, we point out that the Lubich type operators give compact finite difference formula for the first order derivative only. For other higher derivatives, we get valid finite difference formulas with non-compact form.

\section{A unified explicit form}\label{SecUnfiedExplictForm}

In this section, we extend  the explicit form (\ref{eq:ExplictBetaj}) appeared in \cite{gunarathna2019explicit}  to a more general form that covers compact finite difference formulas for higher order classical derivatives as well and some new Lubich type generators  for fractional derivatives.

For this, we introduce a base differential order $d$, a positive integer, to express the fractional derivative as $$ D_{x\pm}^{\alpha}=\left(D^{d}\right)_{x\pm}^{\frac{\alpha}{d}}, $$  and consider approximating the fractional derivative by a Lubich type generator of the form
\begin{equation}\label{eqW(z)d}
W(z)=\left(\beta_{0}+\beta_{1}z+\cdots+\beta_{N}z^{N}\right)^{\frac{\alpha}{d}}=\left(P(z)\right)^\frac{\alpha}{d},
\end{equation}
where $P(z)$ corresponds to the classical derivative operator $D^{d}$.

The coefficients $\beta_{j}$  in (\ref{eqW(z)d}) are to be determined based on the fractional order $\alpha$, the required approximation order $p$, and shift $r$ for the GTA. The degree $N$ of $P(z)$ will also be determined according to the choice of $p$ and $d$. Then, we have the following theorem leading to our main result.
\begin{thm}\label{ThmLinearSystem}
With assumptions of Proposition \ref{propG(z)Approxi},
	the generator of the form $W(z)=\left(\beta_{0}+\beta_{1}z+\cdots+\beta_{N-1}z^{N-1}\right)^{\frac{\alpha}{d}} $, where $d$ is a positive integer,  approximates the  fractional derivatives
	$D_{x\mp}f(x)$ at $x$ with order $p$  and shift $r$  if and only if
	the coefficients   $ \beta_j $ satisfy the linear system
	\begin{equation}\label{eqVanderMonde}
	\sum_{j=0}^{N-1} (\lambda - j)^k \beta_j = d!\delta_{d,k}, \qquad k = 0,1,\cdots, N-1,
	\end{equation}
	where $\lambda = rd/\alpha, \quad N=p+d$ and $\delta_{d,k}$ is the Kronecker delta having value of one for  $k=d$ and zero otherwise.
\end{thm}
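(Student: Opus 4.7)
The plan is to reduce the equivalence to the power-series characterisation of Proposition \ref{propG(z)Approxi}, which states that $W(z)$ is an order-$p$, shift-$r$ GTA of $D_{x\mp}^{\alpha}$ if and only if
\[
G(z) := z^{-\alpha}\,W(e^{-z})\,e^{rz} = 1 + O(z^{p}).
\]
Plugging in $W(z) = P(z)^{\alpha/d}$ with $P(z) = \sum_{j=0}^{N-1}\beta_{j}z^{j}$, and using the identities $z^{-\alpha} = (z^{-d})^{\alpha/d}$ and $e^{rz} = (e^{\lambda z})^{\alpha/d}$ for $\lambda = rd/\alpha$, I would collect everything under a single fractional power:
\[
G(z) = \left(\frac{P(e^{-z})\,e^{\lambda z}}{z^{d}}\right)^{\alpha/d}.
\]
Since $u \mapsto u^{\alpha/d}$ is an analytic isomorphism on a neighbourhood of $u = 1$, the condition $G(z) = 1 + O(z^{p})$ is equivalent to
\[
P(e^{-z})\,e^{\lambda z} = z^{d} + O(z^{d+p}) = z^{d} + O(z^{N}).
\]

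The core calculation is then a single exponential expansion. Writing
\[
P(e^{-z})\,e^{\lambda z} = \sum_{j=0}^{N-1}\beta_{j}\, e^{(\lambda-j)z} = \sum_{k=0}^{\infty}\frac{z^{k}}{k!}\sum_{j=0}^{N-1}(\lambda-j)^{k}\beta_{j},
\]
I would match the coefficients of $z^{k}$ for $0 \le k \le N-1$ against the target $z^{d}$. This forces $\sum_{j}(\lambda-j)^{k}\beta_{j} = 0$ for $k \ne d$ and $\sum_{j}(\lambda-j)^{d}\beta_{j} = d!$, which together give exactly the linear system (\ref{eqVanderMonde}). Both directions of the equivalence are contained in this single matching: the $N$ equations are equivalent to $P(e^{-z})e^{\lambda z}$ agreeing with $z^{d}$ modulo $z^{N}$, which via the fractional-power isomorphism above is in turn equivalent to $G(z) = 1 + O(z^{p})$.

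The main subtle step I anticipate is the analytic book-keeping around the fractional power. One needs $P(z)^{\alpha/d}$ to be a well-defined power series in a neighbourhood of $z = 0$, which amounts to $\beta_{0} = P(0) > 0$ so that a principal branch can be chosen, and one needs the equivalence $G(z) = 1 + O(z^{p}) \Leftrightarrow H(z) := z^{-d} P(e^{-z}) e^{\lambda z} = 1 + O(z^{p})$, which follows from $(1 + g(z))^{\alpha/d} = 1 + (\alpha/d)\,g(z) + O(g(z)^{2})$ because this map preserves the order of vanishing whenever $\alpha/d \ne 0$. Once these analytic preliminaries are in place, the theorem reduces to pure coefficient matching; observing that the coefficient matrix $[(\lambda-j)^{k}]_{k,j=0}^{N-1}$ is of Vandermonde type with distinct nodes $\lambda - j$ shows that the $\beta_{j}$ are uniquely determined, and specialising to $d = 1$ should recover the explicit formula of Proposition \ref{explictform}.
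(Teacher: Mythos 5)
Your proposal is correct and follows essentially the same route as the paper: both reduce the statement to the characterisation $G(z)=z^{-\alpha}W(e^{-z})e^{rz}=1+O(z^{p})$, rewrite $G(z)=\bigl(z^{-d}\sum_{j}\beta_{j}e^{(\lambda-j)z}\bigr)^{\alpha/d}$, expand the exponentials, and match coefficients of $z^{k}$ for $0\le k\le N-1$. The only cosmetic difference is that where you invoke the local analytic isomorphism $u\mapsto u^{\alpha/d}$ near $u=1$ to transfer the condition from $G$ to $H(z)=z^{-d}P(e^{-z})e^{\lambda z}$ in one step, the paper argues the same thing by hand --- first killing the pole part ($b_{0}=\cdots=b_{d-1}=0$, $b_{d}=1$) and then successively comparing coefficients in the binomial expansion of $(1+X)^{\alpha/d}$.
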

\begin{proof}
	In view of Proposition \ref{propG(z)Approxi}, we have $G(z)=\frac{1}{z^{\alpha}}W\left(e^{-z}\right)e^{rz}=1+O(z^{p})$.
	This gives
	\begin{align*}
	G(z) &  =
	\frac{1}{z^\alpha} \left(\sum_{j=0}^{N} \beta_j e^{-jz}\right)^{\frac{\alpha}{d}} e^{rz} =
	\frac{1}{z^\alpha} \left(\sum_{j=0}^{N} \beta_j e^{(rd/\alpha -j)z}\right)^{\frac{\alpha}{d}} =\left(\frac{1}{z^{d}}\sum_{j=0}^{N}\beta_{j}e^{\lambda_{j}z}\right)^{\frac{\alpha}{d}}\\
	&=\left(\frac{1}{z^{d}}\sum_{j=0}^{N}\beta_{j}\sum_{k=0}^{\infty}\frac{1}{k!}\lambda_{j}^{k}z^{k}\right)^{\frac{\alpha}{d}}=\left(\frac{1}{z^{d}}\sum_{k=0}^{\infty}b_{k}z^{k}\right)^{\frac{\alpha}{d}}\\
	&=\left(\frac{b_{0}}{z^{d}}+\frac{b_{1}}{z^{d-1}}+\cdots+\frac{b_{d-1}}{z}+b_{d}+\sum_{k=d+1}^{\infty}b_{k}z^{k-d}\right)^{\frac{\alpha}{d}}=1+O(z^{p}),
	\end{align*}
	where $\lambda_{j}=\lambda-j,\quad \lambda=\frac{rd}{\alpha}$ and
	\begin{equation}\label{Eq_Coefficient}
	b_k = \frac{1}{k!} \sum_{j=0}^{N} \lambda_j^{k} \beta_j ,
	\qquad n = 0,1,2,\cdots.
	\end{equation}
	
Since $G(z)$   does not have any pole singularities by virtue of (\ref{eqG(z)}), we have $b_k = 0$ for $k=0, 1, \cdots, d-1$.
 Moreover, since $G(0)=1$, we have $b_{d}=1$. These are the consistency condition for the GTA with generator $W(z)$.
Now, for order $p=1$, these  conditions give the system
	(\ref{eqVanderMonde}) with $N = 1+d$ and the proof ends.

For $p>1$, $G(z)$ reduces to

\begin{equation*}\label{eqConditionOrderp}
G(z) = \left(
1+ \sum_{k=d+1}^\infty b_k z^{k-d} \right)^{\frac{\alpha}{d}}=: (1+X)^{\gamma}=1+O(z^p),
\end{equation*}
where $\gamma=\frac{\alpha}{d}$ and
\begin{equation}\label{Eq_X}
X =  \sum_{k=d+1}^\infty b_k z^{k-d}.
\end{equation}
Expansion of $(1+X)^{\gamma}$  gives
\begin{equation}\label{eqTaylorExpansion1+x}
1+\gamma X+\frac{\gamma(\gamma-1)}{2!}X^{2}+\cdots=1+O(z^{p}).
\end{equation}
The term with $z$ appears in the term $\gamma X$ only on the left hand side of (\ref{eqTaylorExpansion1+x}). This gives $b_{d+1}=0$. The same is true for   $b_k$ by successively comparing the coefficients of $z^{k-d}, d<k< p+d$, to gain $O(z^{p})$ in (\ref{eqTaylorExpansion1+x}).

Altogether, we have $b_k=\delta_{d,k}$, $k=0, 1, 2, \cdots, p+d-1$ which yield the linear system(\ref{eqVanderMonde}) with (\ref{Eq_Coefficient}) and $ N = p+d$.
\end{proof}
The linear system (\ref{eqVanderMonde}) can be expressed in matrix form
\begin{equation}\label{eqVandermondMatrixform}
V(\lambda_0, \lambda_1, \cdots, \lambda_{N-1})\mathbf{b}=\mathbf{d},
\end{equation}
with $ \mathbf{b}=\left[\beta_0, \beta_1, \cdots, \beta_{N-1}\right]^{T}$, $\mathbf{d} =[0, 0, \cdots, d!, 0, \cdots,0 ]^{T}$, where $d!$ is at the $d^{th}$  position, and the Vandermonde matrix $V \equiv V (\lambda_0, \lambda_1, \cdots, \lambda_{N-1})$ whose   columns are  $[1, \lambda_k, \lambda_k^2, \cdots, \lambda_{k}^{N-1}]^{T},  k = 0,1,\cdots,N-1,$ with its determinant
\begin{equation}\label{Eq:Vande_Determinat}
|V|=\prod_{0\le n<m\le N-1}(\lambda_m-\lambda_n).
\end{equation}

Our main result is that the coefficients $\beta_j , 0 \le j <N $ of ${\bf b} $ in (\ref{eqVandermondMatrixform}) can be explicitly expressed as  given in the following theorem.
\begin{thm}\label{ThmBetaSolution}
Let $\alpha >0$, a positive integer $d\ge 1$ and $f(x)$ be a sufficiently smooth function such that   $D_{x\pm}^{\alpha}f(x)$ is defined. For a Gr\"unwald type approximation (\ref{Eq:Difference_form}) for $D_{x \pm}^{\alpha}f(x)$ of order $p$ and shift $r$ with the generator in the form (\ref{eqW(z)d}), the coefficients $\beta_{j}$ are given by
	
	\begin{equation}\label{eqnBetaNumerator-Denominator}
	\beta_{j}=\frac{N_{j}}{D_{j}}, \qquad j=0, 1,\cdots, N-1,
	\end{equation}
	
where $N=p+d$ and
\begin{equation}\label{Eq_NjDj}	
N_{j}=\sum_{\begin{smallmatrix}
		0\leq m_1<m_2<\cdots<m_{p-1}\leq N-1\\m_i \ne j, 1\le i \le p-1
		\end{smallmatrix}} \prod_{k=0}^{p-1}(\lambda-m_k), \quad D_{j} =\frac{(-1)^d}{d!} \prod_{\begin{smallmatrix}
		m=0\\m \ne j
		\end{smallmatrix}}^{N-1}(j-m).
\end{equation}
Moreover, the leading and some successive error coefficients $ R_m $ of the approximation are given by
\begin{equation}\label{eq_ErrorR(d)}
R_m=\frac{\alpha}{m! d}\sum_{j=0}^{N-1}(\lambda-j)^{m} \beta_j,\quad
m = N, N+1, \cdots, N + p.
\end{equation}
\end{thm}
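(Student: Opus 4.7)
The plan is to exploit the Vandermonde system from Theorem \ref{ThmLinearSystem}, whose coefficient matrix is invertible by (\ref{Eq:Vande_Determinat}) since the nodes $\lambda_j = \lambda - j$, $j = 0, 1, \ldots, N-1$, are pairwise distinct, so the system uniquely determines the $\beta_j$. I would extract each $\beta_{j^\ast}$ by a Lagrange-interpolation identity: for any polynomial $P(x) = \sum_{k=0}^{N-1} c_k x^k$ of degree at most $N-1$, multiplying the $k$-th equation of (\ref{eqVanderMonde}) by $c_k$ and summing over $k$ yields
\begin{equation*}
\sum_{j=0}^{N-1} \beta_j\, P(\lambda_j) = d!\, c_d.
\end{equation*}
Choosing $P$ to be the Lagrange basis polynomial at the node $\lambda_{j^\ast}$,
\begin{equation*}
P(x) = \prod_{m \ne j^\ast} \frac{x - \lambda_m}{\lambda_{j^\ast} - \lambda_m},
\end{equation*}
collapses the left side to $\beta_{j^\ast}$, so $\beta_{j^\ast}$ equals $d!$ times the coefficient of $x^d$ in $P(x)$.

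Reading off that coefficient proceeds by expanding the numerator in elementary symmetric polynomials of $\{\lambda_m : m \ne j^\ast\}$: one has $\prod_{m \ne j^\ast}(x - \lambda_m) = \sum_{k=0}^{N-1}(-1)^{N-1-k} e_{N-1-k}\, x^k$, so the $x^d$ coefficient is $(-1)^{p-1} e_{p-1}$, since $N-1-d = p-1$. The denominator evaluates to $\prod_{m \ne j^\ast}(\lambda_{j^\ast}-\lambda_m) = \prod_{m \ne j^\ast}(m - j^\ast) = (-1)^{N-1}\prod_{m \ne j^\ast}(j^\ast - m)$. Combining the two and using $(-1)^{p-1}/(-1)^{N-1} = (-1)^{-d} = (-1)^d$, then writing $e_{p-1}(\{\lambda - m : m \ne j^\ast\})$ as a sum over strictly ordered index tuples, recovers the numerator/denominator split (\ref{eqnBetaNumerator-Denominator})--(\ref{Eq_NjDj}).

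For the error coefficients (\ref{eq_ErrorR(d)}), I would resume the Taylor expansion from the proof of Theorem \ref{ThmLinearSystem}. Under the order-$p$ condition we have $X = \sum_{k \ge N} b_k\, z^{k-d} = O(z^p)$, so expanding $G(z) = (1+X)^{\alpha/d} = 1 + (\alpha/d)\,X + O(z^{2p})$ and matching coefficients of $z^{m-d}$ identifies the leading successive error coefficients with $(\alpha/d)\, b_m$ for the initial range of $m$; substituting the formula (\ref{Eq_Coefficient}) for $b_m$ then gives (\ref{eq_ErrorR(d)}).

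The main obstacle I anticipate is the compact re-expression of $e_{p-1}(\{\lambda - m : m \ne j^\ast\})$ as the sum-over-ordered-indices $N_{j^\ast}$ of (\ref{Eq_NjDj}), together with the sign bookkeeping between $(-1)^{N-1-d}$, $(-1)^{N-1}$, and the factor $(-1)^d/d!$ sitting inside $D_{j^\ast}$. Once those are aligned, the remainder is a routine application of Lagrange interpolation to a Vandermonde system and a direct power-series expansion.
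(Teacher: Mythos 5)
Your proposal is correct, and the error-coefficient part coincides with the paper's argument (both resume the expansion $G(z)=(1+X)^{\alpha/d}$, observe that $z^{m-d}$ for $N\le m<N+p$ receives contributions only from the linear term $\gamma X$, and read off $R_m=\gamma b_m$ via (\ref{Eq_Coefficient})). For the formula for $\beta_j$, however, you take a genuinely different route. The paper solves the Vandermonde system by Cramer's rule, $\beta_j=|V_j(\mathbf{d})|/|V|$, and then needs a separate determinant evaluation (Lemmas 2--3 of \ref{App:Vander}, proved by a homogeneity-and-factorization argument) to express the cofactor $|V_{j,d}|$ as $|V|$ times an elementary symmetric polynomial. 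You instead dualize: multiplying the $k$-th equation of (\ref{eqVanderMonde}) by $c_k$ and summing gives $\sum_j\beta_j P(\lambda_j)=d!\,c_d$ for every polynomial $P=\sum_k c_kx^k$ of degree at most $N-1$, and taking $P$ to be the Lagrange basis polynomial at $\lambda_{j^\ast}$ isolates $\beta_{j^\ast}$ as $d!$ times its $x^d$-coefficient. This bypasses determinants entirely and replaces the appendix lemma by the standard expansion of a monic product in elementary symmetric polynomials; your sign bookkeeping $(-1)^{p-1}/(-1)^{N-1}=(-1)^d$ and the identification $\lambda_{j^\ast}-\lambda_m=m-j^\ast$ are exactly right and land on (\ref{eqnBetaNumerator-Denominator})--(\ref{Eq_NjDj}). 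What the paper's route buys is the explicit variant-Vandermonde determinant identity as a standalone result; what yours buys is a shorter, more elementary and self-contained derivation. One cosmetic remark: the index bookkeeping in (\ref{Eq_NjDj}) as printed (a product over $p$ factors indexed $k=0,\dots,p-1$ under a sum over $p-1$ indices) is internally inconsistent in the paper itself; your reading of $N_j$ as $e_{p-1}(\{\lambda-m: m\ne j\})$ is the intended one and is what the paper's own appendix computation produces.
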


\begin{proof}
Solving the Vandermonde system (\ref{eqVandermondMatrixform}) by Cramer's rule, we get
	
	\begin{equation}\label{eqBetabyCramerRule}
	\beta_j=\frac{|V_{j}(\mathbf{d})|}{|V|}, \qquad j=0, 1, \cdots, N-1,
	\end{equation}
where $|V| = |V(\lambda_0, \lambda_1, \cdots, \lambda_{N-1})| $ and $  V_{j}(\mathbf{d})$ is the matrix obtained from $V$ by replacing its $j^{\text{th}}$ column with $\mathbf{d}$.

For evaluating the determinant $|V_j(\mathbf{d})|$, observe that the $j^{\text{th}}$ column has only one non-zero entry $ d! $ at the $d^{\text{th}}$ place. Hence, pivoting with the $j^{\text{th}}$ column, we have $|V_{j}(\mathbf{d})|=(-1)^{j+d}|V_{j,d}|$, where $|V_{j,d}|$ is the determinant of the matrix obtained from removing the $j^{\text{th}}$ column and $d^{\text{th}}$ row from   $V$, and is given by (see \ref{App:Vander})
\[
|V_{j}(\mathbf{d})|=(-1)^{j+d}d!\prod_{\begin{smallmatrix}
	m>n\\ m,n \ne j
	\end{smallmatrix}}(\lambda_{m}-\lambda_{n})N_j.
\]
Rearranging the Vandermonde determinant (\ref{Eq:Vande_Determinat}) as
\[ |V|=\prod_{\begin{smallmatrix}
	0\le n<m\le N-1 \\ m, n \ne j
	\end{smallmatrix}}(\lambda_{m}-\lambda_{n})(-1)^j \prod_{\begin{smallmatrix}
	m=0\\m\ne j
	\end{smallmatrix}}^{N-1}(\lambda_m-\lambda_j)
\]
and substituting in (\ref{eqBetabyCramerRule}), we have (\ref{eqnBetaNumerator-Denominator})  with $\lambda_m-\lambda_j=j-m$.

For the leading and subsequent error terms,  note that (\ref{Eq_X}) is reduced to
\begin{equation*}
X = b_{N}z^{p} +b_{N+1}z^{p+1} + \cdots + b_{N+p-1}z^{2p-1} +\cdots.
\end{equation*}
In (\ref{eqTaylorExpansion1+x}), the   terms with $z^{k} , k = p, p+1, \cdots, 2p-1$ appear only in $X$.
Thus,  the error coefficients are given by $$ R_m =  \gamma b_m=\frac{\alpha}{m!d}\sum_{j=0}^{N-1}(\lambda-j)^{m}\beta_j,\qquad
 N \le m < N+p. $$
\end{proof}

The numerator and denominator terms of the coefficients $\beta_j$ can be easily and efficiently evaluated despite their complicated expressions. Moreover, for rational  parameters of $\alpha,  p $  and   $r$, these terms will also be rational numbers. Therefore, finite difference coefficients for classical derivatives
can be evaluated to their exact rational values.

\section{Efficient evaluations}\label{Sec:Efficient}

We describe the details of evaluating the coefficients of the unified formulation.
The denominators and numerators terms of the coefficients  $\beta_j$ are evaluated separately.

Separation of the evaluation of numerators and denominators allows one to implement the
explicit expression efficiently and to express the coefficients in
exact fractional form where possible, that is, when $\lambda$ is an integer or a rational number.

\subsection{Evaluating the denominator }

The denominator  is evaluated in recursive form
for efficient computation as follows:
The denominator of the coefficient $\beta_j$   given in (\ref{Eq_NjDj}) can be expressed as
\[
D_j = \frac{(-1)^{d}} {d!}\prod_{m=0}^{N-1} (j-m) = \frac{(-1)^{p-1-j}} {d!} j!  (N-1-j)!.
\]

This can be described by the recursive form
\[
D_0 = \prod_{m=d+1}^{N-1} (-m)
,\qquad  D_j = \frac{-j}{N-j}D_{j-1}, j = 1,2,\cdots, N-1.
\]

Hence, the algorithm for the denominator is as follows:
\begin{alg} Compute the denominators
\begin{enumerate}
\item Compute $D_0$:\\
 $ D_0 \leftarrow -m D_0, \quad \text{ for } m = d+1, d+2, \cdots, N-1. $
\item Compute $D_j$ :\\

$ D_j \leftarrow \frac{-j}{N-j}D_{j-1},\qquad $ for $ j = 1, 2, \cdots, N - 1$.
\end{enumerate}
\end{alg}

Since the denominator is independent of any shift $r$ and derivative order $\alpha$, it can be evaluated once and used for various shifts and order $\alpha$.

\subsection{Evaluating the numerator }

An efficient algorithm for evaluating the numerators can be devised  once we identify the sum-product term in (\ref{Eq_NjDj}) as elementary symmetric polynomials (ESPs) $S(X_j, p-1)$ in the set
\[
X_j = \{ \lambda_k = \lambda - k :  \quad  0\le k \le N-1, k\ne j \}.
\]

 The ESPs can be
efficiently computed by the recursive relations described in  \ref{App:ESP}.
We use the  operations (\ref{addfactor}) and (\ref{removefactor}) to obtain the required coefficients
$ S(X_j,p-1), j = 0,1 ,\cdots, N-1$.

Note that each numerator in $\beta_j$ has the ESP on distinct sets $A_j$.
In fact, $S(X_j, p-1) $ is the coefficient of $x^{N-1-(p-1)} = x^{d} $ in the monomial
\[
M_j(x) = \prod_{m = 0, m \ne j}^{N-1} (x + x_m)
\]
of degree $N-1$ that can be expressed in the recursive form
\[
M_0(x) = \prod_{m = 1}^{N-1} (x + x_m), \qquad
M_j(x) = M_{j-1}(x) \frac{x + x_{j-1}}{x + x_j}.
\]

The polynomials $M_j(x)$  can be computed by combining the two recurrence relations in (\ref{addfactor}) and (\ref{removefactor}) as follows:

Let
\[
M_{j-1} (x) = \sum_{k=0}^{N-1} p_k x^k, \qquad
M_j(x) = \sum_{k=0}^{N-1} q_k x^k.
\]
and denote their coefficient vectors as $ {\bf p} $ and  $ {\bf q} $ respectively.
Then, the algorithm to compute the numerators is devised as follows:
\begin{alg}
Computing the numerator terms $N_j$ .
\begin{enumerate}
\item Compute $ M_0(x)  $:

$ p_0 = 1, $\\
For $ m = 1 , 2, \cdots, N-1 $, with $ p_{-1} = 0 $,
\[
p_k \leftarrow  p_{k-1} + x_m p_k , \qquad k = m,m-1, \cdots, 0.
\]
$ N_0 =  S(X_0, p-1) = p_d.$

\item Compute $ M_j(x) $ :

For $j = 1, 2, \cdots , N-1$, with $p_N = q_N = 0$,
\begin{align*}
q_k &\leftarrow  p_k + x_{j-1}p_{k+1} -x_j q_{k+1}, \text{ for }  k =  N-1, N-2, \cdots, 0. \\
{\bf p} &\leftarrow  {\bf q},
\end{align*}
$ N_j = S(X_j , p-1)   = q_{d}. $
\end{enumerate}
\end{alg}

\subsection{Evaluation of Error}
The leading and subsequent error coefficients are evaluated
directly as the expression involves on a sum. From (\ref{eq_ErrorR(d)}), we have the following algorithm:

\begin{alg} Computing the error coefficient of order $m, p \le m \le 2p $.

Given $\beta_j = N_j/D_j $ for $ j = 0,1,\cdots,N-1 $,
\begin{enumerate}
\item $ E \leftarrow 0$, \\
For $ j = 0,1,\cdots, N-1, $
$
    E \leftarrow  E + \lambda_j^{m+d}\beta_j,
$
\item $ R_m \leftarrow  \frac{\alpha}{(m+d)! d} E .$
\end{enumerate}
\end{alg}

\subsection{Operation count}

The direct evaluation of the numerator part in (\ref{Eq_NjDj})
computes the sum-product form with all possible combinations. The operation counts for computing the numerator part with base differential order $d$ and approximation order $p$ are
$MN$ additions and $MN(p-1)$ multiplications, where $ M = \binom{N-1}{p-1} $ and $N=p+d$.
For small values of $N$, this count is reasonably small. However,
for large $N$, the operation count uncontrollably increases. For example, the number of additions and multiplications for computing the numerator
for a 10-th order derivative with approximation order 10 by FDNum are  1847560   and 16628040 respectively.

However, the efficient computation described above
computes the same numerator in only 968 additions and the same amount of multiplications.

\section{Difference formulas}\label{SecFiniteDifferennceFormulas}

We use the unified explicit form in this paper to  construct
approximating Lubich type generators with a given base differential order for the left and right fractional derivatives along with their leading
error coefficients. We also list some finite difference
formulas of various types that can be constructed from the unified formulation.

\subsection{Difference approximations for fractional derivatives}\label{SecFracGens}

When the base differential operator  is the first derivative operator  $D = d/dx $ with base differential order $ d = 1$, we obtain the Lubich type generators obtained in \cite{NasirNafa} with shift $r$ and are given in Table \ref{Tab:Lubich_Type}. When there is no shift ($r=0$), this gives the Lubich generators in Table \ref{Tab:Lubich}. Hence, our unified formulation is
a further generalization of the  Lubich type generators and the Lubich generators for the approximation of fractional derivatives with and without shifts respectively.

When the base differential operator is of order $d>1$, we get new approximating generators.
Tables \ref{TableGens2} and  \ref{TableGens3} list some  approximating
generators with base differential orders $d = 2$ and 3 respectively and approximation orders $p = 1,2,3,4,5$. Note that the first order approximations in all cases
turn out to be the same Gr\"unwald generator $W_1(z) = (1-z)^\alpha $.

The coefficients $w_k  $ of the generators can be computed from the J.C.P. Miller formula given as follows (see \cite{weilbeer2006efficient} for a proof):
\[
W(z) = \left( \sum_{j=0}^{N-1} \beta_j z^j \right)^\gamma
= \sum_{k=0}^{\infty} w_k z^k
\]
\begin{equation}\label{Eq:Miller}
w_0  = \beta_0^\gamma ,\quad
w_m   = \frac{1}{m w_0 }
    \sum_{k=1}^{M}( k (\gamma+1)-m) \beta_k w_{m-k} ,\quad
    m = 1, 2, \cdots,
\end{equation}
where $ M = \min{(m, N)}.$

The Gr\"unwald type approximation for the left or right fractional derivative is then given by (\ref{Eq:Difference_form}).

\begin{table}\centering
\begin{tabular}{lll}
\hline
\hline
$p$ & Coefficients $ \beta_j , 0\le j \le p+d-1 $ & with  $ d = 2 $ and $ \lambda = rd/\alpha $ \\
\hline
\hline
1 & $\beta_0= 1 $
 & $\beta_1= -2 $\\
 & $\beta_2= 1 $\\
\hline
2 & $\beta_0= - \lambda + 2 $
 & $\beta_1= 3 \lambda - 5 $\\
 & $\beta_2= - 3 \lambda + 4 $
 & $\beta_3= \lambda - 1 $\\
\hline
3 & $\beta_0= \frac{\lambda^{2}}{2} - \frac{5 \lambda}{2} + \frac{35}{12} $
 & $\beta_1= - 2 \lambda^{2} + 9 \lambda - \frac{26}{3} $\\
 & $\beta_2= 3 \lambda^{2} - 12 \lambda + \frac{19}{2} $
 & $\beta_3= - 2 \lambda^{2} + 7 \lambda - \frac{14}{3} $\\
 & $\beta_4= \frac{\lambda^{2}}{2} - \frac{3 \lambda}{2} + \frac{11}{12} $\\
\hline
4 & $\beta_0= - \frac{\lambda^{3}}{6} + \frac{3 \lambda^{2}}{2} - \frac{17 \lambda}{4} + \frac{15}{4} $
 & $\beta_1= \frac{5 \lambda^{3}}{6} - 7 \lambda^{2} + \frac{71 \lambda}{4} - \frac{77}{6} $\\
 & $\beta_2= - \frac{5 \lambda^{3}}{3} + 13 \lambda^{2} - \frac{59 \lambda}{2} + \frac{107}{6} $
 & $\beta_3= \frac{5 \lambda^{3}}{3} - 12 \lambda^{2} + \frac{49 \lambda}{2} - 13 $\\
 & $\beta_4= - \frac{5 \lambda^{3}}{6} + \frac{11 \lambda^{2}}{2} - \frac{41 \lambda}{4} + \frac{61}{12} $
& $\beta_5= \frac{\lambda^{3}}{6} - \lambda^{2} + \frac{7 \lambda}{4} - \frac{5}{6} $\\
\hline
5 & $\beta_0= \frac{\lambda^{4}}{24} - \frac{7 \lambda^{3}}{12} + \frac{35 \lambda^{2}}{12} - \frac{49 \lambda}{8} + \frac{203}{45} $
 & $\beta_1= - \frac{\lambda^{4}}{4} + \frac{10 \lambda^{3}}{3} - \frac{31 \lambda^{2}}{2} + 29 \lambda - \frac{87}{5} $\\
 & $\beta_2= \frac{5 \lambda^{4}}{8} - \frac{95 \lambda^{3}}{12} + \frac{137 \lambda^{2}}{4} - \frac{461 \lambda}{8} + \frac{117}{4} $
 & $\beta_3= - \frac{5 \lambda^{4}}{6} + 10 \lambda^{3} - \frac{121 \lambda^{2}}{3} + 62 \lambda - \frac{254}{9} $\\
 & $\beta_4= \frac{5 \lambda^{4}}{8} - \frac{85 \lambda^{3}}{12} + \frac{107 \lambda^{2}}{4} - \frac{307 \lambda}{8} + \frac{33}{2} $
 & $\beta_5= - \frac{\lambda^{4}}{4} + \frac{8 \lambda^{3}}{3} - \frac{19 \lambda^{2}}{2} + 13 \lambda - \frac{27}{5} $\\
 & $\beta_6= \frac{\lambda^{4}}{24} - \frac{5 \lambda^{3}}{12} + \frac{17 \lambda^{2}}{12} - \frac{15 \lambda}{8} + \frac{137}{180} $\\
\hline
\hline
\end{tabular}
\caption{Coefficients for polynomial $ p(z) = (\beta_0 + \beta_1 z+\cdot +\beta_{p+d-1} z^{p+d-1}  ) $ for generator $ W_{p,d}(z) = (p(z))^{\alpha/d } $ with base differential order
 $d = 2$ and $  \lambda = rd/\alpha $ }\label{TableGens2}
\end{table}

\begin{table}\centering
\begin{tabular}{lll}
\hline
\hline
$p$ & Coefficients $ \beta_j , 0\le j \le p+d-1 $ & with     $ d = 3 $ and $ \lambda = rd/\alpha $ \\
\hline
\hline
1 & $\beta_0= 1 $
 & $\beta_1= -3 $\\
 & $\beta_2= 3 $
 & $\beta_3= -1 $\\
\hline
2 & $\beta_0= - \lambda + \frac{5}{2} $
 & $\beta_1= 4 \lambda - 9 $\\
 & $\beta_2= - 6 \lambda + 12 $
 & $\beta_3= 4 \lambda - 7 $\\
 & $\beta_4= - \lambda + \frac{3}{2} $\\
\hline
3 & $\beta_0= \frac{\lambda^{2}}{2} - 3 \lambda + \frac{17}{4} $
 & $\beta_1= - \frac{5 \lambda^{2}}{2} + 14 \lambda - \frac{71}{4} $\\
 & $\beta_2= 5 \lambda^{2} - 26 \lambda + \frac{59}{2} $
 & $\beta_3= - 5 \lambda^{2} + 24 \lambda - \frac{49}{2} $\\
 & $\beta_4= \frac{5 \lambda^{2}}{2} - 11 \lambda + \frac{41}{4} $
3 & $\beta_5= - \frac{\lambda^{2}}{2} + 2 \lambda - \frac{7}{4} $\\
\hline
4 & $\beta_0= - \frac{\lambda^{3}}{6} + \frac{7 \lambda^{2}}{4} - \frac{35 \lambda}{6} + \frac{49}{8} $
 & $\beta_1= \lambda^{3} - 10 \lambda^{2} + 31 \lambda - 29 $\\
 & $\beta_2= - \frac{5 \lambda^{3}}{2} + \frac{95 \lambda^{2}}{4} - \frac{137 \lambda}{2} + \frac{461}{8} $
 & $\beta_3= \frac{10 \lambda^{3}}{3} - 30 \lambda^{2} + \frac{242 \lambda}{3} - 62 $\\
 & $\beta_4= - \frac{5 \lambda^{3}}{2} + \frac{85 \lambda^{2}}{4} - \frac{107 \lambda}{2} + \frac{307}{8} $
 & $\beta_5= \lambda^{3} - 8 \lambda^{2} + 19 \lambda - 13 $\\
 & $\beta_6= - \frac{\lambda^{3}}{6} + \frac{5 \lambda^{2}}{4} - \frac{17 \lambda}{6} + \frac{15}{8} $\\
\hline
5 & $\beta_0= \frac{\lambda^{4}}{24} - \frac{2 \lambda^{3}}{3} + \frac{23 \lambda^{2}}{6} - \frac{28 \lambda}{3} + \frac{967}{120} $
 & $\beta_1= - \frac{7 \lambda^{4}}{24} + \frac{9 \lambda^{3}}{2} - \frac{295 \lambda^{2}}{12} + \frac{111 \lambda}{2} - \frac{638}{15} $\\
 & $\beta_2= \frac{7 \lambda^{4}}{8} - 13 \lambda^{3} + \frac{135 \lambda^{2}}{2} - 142 \lambda + \frac{3929}{40} $
 & $\beta_3= - \frac{35 \lambda^{4}}{24} + \frac{125 \lambda^{3}}{6} - \frac{1235 \lambda^{2}}{12} + \frac{1219 \lambda}{6} - \frac{389}{3} $\\
 & $\beta_4= \frac{35 \lambda^{4}}{24} - 20 \lambda^{3} + \frac{565 \lambda^{2}}{6} - 176 \lambda + \frac{2545}{24} $
 & $\beta_5= - \frac{7 \lambda^{4}}{8} + \frac{23 \lambda^{3}}{2} - \frac{207 \lambda^{2}}{4} + \frac{185 \lambda}{2} - \frac{268}{5} $\\
 & $\beta_6= \frac{7 \lambda^{4}}{24} - \frac{11 \lambda^{3}}{3} + \frac{95 \lambda^{2}}{6} - \frac{82 \lambda}{3} + \frac{1849}{120} $
 & $\beta_7= - \frac{\lambda^{4}}{24} + \frac{\lambda^{3}}{2} - \frac{25 \lambda^{2}}{12} + \frac{7 \lambda}{2} - \frac{29}{15} $\\
\hline
\hline
\end{tabular}
\caption{Coefficients for polynomial $ p(z) = (\beta_0 + \beta_1 z+\cdot +\beta_{p+d-1} z^{p+d-1}  ) $ for generator $ W_{p,d}(z) = (p(z))^{\alpha/d } $ with base differential order
 $d = 3$ and $  \lambda = rd/\alpha $ }\label{TableGens3}
\end{table}

\subsection{Finite difference formulas}\label{SecFiniteDiff}

The finite difference formulas for classical derivatives
comes in various shapes and flavours. Forward, backward and central finite differences are well-known. There are also finite difference forms that gives
derivatives at any specified grid point. We call them \emph{shifted finite difference forms}.
In addition, there are finite difference forms that give  derivatives at a point   between two adjacent  points. These are called staggered finite difference forms. There are compact finite difference forms that use  the minimum number of grid points for a specified order of accuracy. In contrast, there are also non-compact finite difference forms.

Our proposed algorithm gives all these forms for
specified choices of input parameters of $d, p$ and $r$. In the following, we list the input parameters and their output finite difference forms in some details.

\subsection{Compact finite difference forms}
Compact finite difference forms are those that use the minimum number of
grid points for a specified derivative order and order of approximation and   are commonly used in applications.
There are known compact finite difference forms such as forward (right), backward (left) and central (symmetric).
There are also other compact forms in shifted and staggered formulations.
The presented algorithm gives all these forms.

In the presented algorithm, the input parameters for compact finite difference forms for a classical derivative $\alpha$ is chosen to be equal to the base order $d$.
The required order of approximation can be independently chosen. The number of grid points for all the compact finite difference forms for a differential order $d$ and approximation order $p$ is $ N = p+d $.
All the different  forms are obtained through the choice of the shift parameter $r$ as given in Table \ref{TableShifts}.

\begin{table}[h]\centering
\begin{tabular}{cccccc}
\hline
$\alpha = d$ & Left & Right & Central  & Shifted & Staggered \\
\hline
Shift $ r $ &  0  & $ p+d-1$ & $ (p+d-1)/2 $ & any int & any real\\
\hline
\end{tabular}
\caption{Choice of shift parameters for compact finite difference forms} \label{TableShifts}
\end{table}

Table \ref{Tablecompact} lists some compact
finite difference forms with one example of each kind mentioned above. The coefficient of the function at the point where the derivative is intended is indicated within parentheses. This is given by the index $ c = r - [r] = $ the fractional part of $r$. The finite difference summation formula
starts with function index $r$, (that is  $f_r$ ) from the leftmost coefficient and decrements  by one
for each subsequent coefficient.  The sum is then divided by $h^\alpha$.

The error term takes the leading error coefficient from the explicit form for the error multiplied by $ h^p f^{(p)}(\xi) $, where $\xi$ is a point
between the grid points.

For example, the central compact finite difference form for the derivative of order $\alpha = 3 $ with base $ d = 3 $ and approximation order $p = 4$ and shift $r = 3$  in Table
\ref{Tablecompact} is read as
\[
\frac{d^3}{dx^3}f_0 =  \frac{1}{h^3}
\left(
-\frac{1}{8}f_{3} +
f_{2} -
\frac{13}{8} f_{1} +
\frac{13}{8} f_{-1}
-f_{-2} +
\frac{1}{8} f_{-3}\right) -
\frac{7}{120} h^4 f^{(4)}(\xi) ,\]
where $ f_k = f(x_c + kh) $  and $ c $ is the fractional part of $ r $.

Note that $0\le c < 1$ with non-zero value for non-integer shift $r$ -- the staggered case and zero for integer shifts.
Thus, the last row of Table
\ref{Tablecompact} is read as
\begin{align*}
\frac{d^2}{dx^2}f_{0.5} & =  \frac{1}{h^2}
\left(
\frac{3}{16} f_{1.5}+
\frac{41}{48} f_{0.5} -
\frac{67}{24} f_{-1.5}+
\frac{19}{8} f_{-2.5} -
\frac{35}{48} f_{-3.5}+
\frac{5}{48} f_{-4.5} \right) \\ & +
\frac{341}{5760}  h^4 f^{(4)}(\xi).
\end{align*}

\begin{table}[h]\centering
\begin{tabular}{lllllll}
\hline\hline
$d$  &  $\alpha $   & $p$  & $ r $ &
Coefficients for Compact forms  & Error & Names\\
\hline\hline\\
1  &  1  &  3  &  0 & $
(\frac{11}{6}),
-3,
\frac{3}{2},
-\frac{1}{3}
$ & $
-\frac{1}{4}
$ & Left \\ \\
3  &  3  &  4  &  3 & $
-\frac{1}{8},
1,
-\frac{13}{8},
(0),
\frac{13}{8},
-1,
\frac{1}{8}
$ & $
\frac{-7}{120}
$ & Central \\ \\
2  &  2  &  4  &  1 & $
\frac{5}{6},
(-\frac{ 5}{4}),
-\frac{1}{3},
\frac{7}{6},
-\frac{1}{2},
\frac{1}{12}
$ & $
\frac{13}{180}
$ & Shifted \\ \\
3  &  3  &  4  &  6 & $
-\frac{15}{8},
13,
-\frac{307}{8},
62,
-\frac{461}{8},
29,
(-\frac{49}{8})
$ & $
-\frac{29}{15}
$ & Right \\ \\
2  &  2  &  4  &  1.5 & $
\frac{3}{16},
(\frac{41}{48}),
-\frac{67}{24},
\frac{19}{8},
-\frac{35}{48},
\frac{5}{48}
$ & $
\frac{341}{5760}
$ & Staggered \\ \\
\hline
\end{tabular}
\caption{Some Compact finite difference forms}\label{Tablecompact}
\end{table}

\subsection{Non-compact finite difference forms}
In contrast to the compact finite difference forms, non-compact
finite difference forms
use more grid points than that of the compact forms.
All the various finite difference forms  described in compact
forms are also available in the non-compact forms with the
same choices of the shift $r$.

The non-compact finite difference forms are obtained by choosing the base order $d$ such that $\alpha = \gamma d$ for some integer $\gamma > 1$.
The coefficients $\beta_j$ for the polynomial
$p(z) = \sum_{j=0}^{p+d} \beta_j z^j$ are first computed by using the unified explicit form in (\ref{Eq_NjDj}).
The generator for the non-compact form is then given by
$ W(z) = ( p(z) )^\gamma $ and is expanded to obtain the
non-compact coefficients. The expansion can be performed by the J. C. P. Miller recurrence formula as was done in the fractional derivative case.

For example, the non-compact finite difference formula for second derivative
($\alpha = 2 $) of approximation order $p =3 $ with $d = 1 \ne \alpha $
and shift $r = 1$ has the
coefficients and leading error coefficient computed from the unified formulation as
\[
\{ \beta_k \} = \left(
\frac{23}{24},
-\frac{7}{8},
-\frac{1}{8},
\frac{1}{24}
 \right), \quad R_3 = \frac{1}{12}.
\]

Therefore, the generator for the approximate formula is given
with power $ \gamma = \alpha/d = 2 $ as
\begin{align*}
W(z) &= \left(
\frac{23}{24}
-\frac{7}{8} z
-\frac{1}{8} z^2 +
\frac{1}{24} z^3
 \right)^2 \\
  & =
\frac{529}{576} -
\frac{161}{96} z +
\frac{101}{192} z^2 +
\frac{43}{144} z^3 -
\frac{11}{192} z^4 -
\frac{1}{96} z^5 +
\frac{1}{576}  z^6.
\end{align*}

Hence, the non-compact finite difference formula with its error term is given by
\begin{align*}
\frac{d^2}{dx^2} f_0 &=
\frac{1}{h^2} \left(\frac{529}{576} f_1 -
\frac{161}{96} f_0  +
\frac{101}{192} f_{-1} \right. \\ & \left.+
\frac{43}{144} f_{-2} -
\frac{11}{192}  f_{-3} -
\frac{1}{96}  f_{-4} +
\frac{1}{576}   f_{-5}\right)
 + \frac{1}{12} h^3 f^{(3)}(\xi).
\end{align*}

\section{Numerical tests}\label{SecApplications}

In order to  demonstrate the usefulness of the presented unified explicit formulation, we present here some numerical examples. Analysis of these
applications is beyond the scope of this paper and will be considered
in the future.

\subsection{Boundary value problem}

As a simple application, consider the boundary value problem:

\begin{equation}\label{eqIntOrderBVP}
\frac{d^2}{dx^2}u(x)=f(x),\quad a\leq x  \leq b, \qquad u(a)=u_a, \quad  u(b)=u_b .
\end{equation}

Consider the computational domain $a=x_0<x_1<\cdots <x_N=b$ with uniform discretization of the domain $[a, b]$ with subinterval size $h=x_{i+1}-x_i$. Usually, the second derivative is approximated by the central difference formula of order 2 for each internal point. That is, with $u_i=u(x_i)$ and $f_i=f(x_i)$,

\begin{equation}\label{eqBVPAtinternalGridPoint}
\frac{d^2}{dx^2} u(x_i)=f(x_i), \qquad i=1, 2, \cdots, N-1
\end{equation}
is approximated as

\begin{equation}\label{eqCentralDifferenceEq}
\frac{1}{h^2} \left(u_{i-1}-2u_i+u_{i+1}\right)=f_i+O(h^2), \qquad i=1, 2, \cdots, N-1.
\end{equation}

The resulting tridiagonal matrix equation is solved, after imposing the boundary conditions, for the discrete solution  $u_i$ which approximates $u(x_i)$  with  $O(h^2)$ error.

Now that we have the explicit form (\ref{eqnBetaNumerator-Denominator}) for any order of accuracy $p$ with $N= p+d$ grid points, we may approximate (\ref{eqBVPAtinternalGridPoint})  by a higher order approximation through  (\ref{Eq:Difference_form}). Since   $\alpha = 2$ ,  we choose $d=2$, so that $\alpha/d=1$.  We have from (\ref{eqW(z)d}), $W(z)= \beta_0+\beta_1z+\cdots+\beta_{N-1}z^{N-1}$, where $\beta_j=\beta_j(r), j = 0,1,\cdots, N-1$ with shift $r$. Each equation in (\ref{eqBVPAtinternalGridPoint}) is approximated  with shift $r=i$.
The approximation of (\ref{eqBVPAtinternalGridPoint}) is then given by

\begin{equation}\label{eqProposedDifference}
\frac{1}{h^2}\sum_{j=0}^{N-1}\beta_j(i)u_j=f_i+O(h^{N-1}), \qquad i=1, 2, \cdots, N-1.
\end{equation}

Note that the order of this approximation is $N-d$ and hence, the higher the
number of nodes, the higher the order of approximation. Matrix form of this system is given by $BU=F$, where we set
$U=\left[\hat{u}_0, \hat{u}_1, \cdots, \hat{u}_N\right]^T$,  $F=\left[f_1, f_2, \cdots, f_{N-1}\right]^T$, and the matrix $B$ is given by $$B_{i, j}=\beta_{j}(i), \qquad i=1, 2, \cdots, N-1, \qquad j=0, 1, \cdots, N.$$

The entries of the matrix $B$ are given by (\ref{eqnBetaNumerator-Denominator}) and are automatically computed by the algorithms for $\beta_j$. Imposing boundary conditions, the system reduces to
 \begin{equation}\label{eqRead-to-solve system}
 \hat{B}\hat{u}=F-B_0u_0-B_Nu_N,
 \end{equation}

where    $\hat{B}=\left[B_1, B_2, \cdots, B_{N-1}\right]$, where  $B_k$  are columns of $B$,
$\hat{U}=\left[\hat{u}_1,\hat{u}_2, \cdots, \hat{u}_{N-1} \right]^T$, and $\hat{u}_k \approx u_k$.
Solving (\ref{eqRead-to-solve system}) yields the approximate solution.

\begin{exmp}\rm
	We apply the above numerical schemes to the following   boundary value problem:
\begin{align}\label{eq:Example1}
	\frac{d^2}{dx^2}u(x) & = -\sin(x), \quad -1<x <1,\\
    u(-1) & =\sin(-1), \quad u(1)= \sin(1). \nonumber
\end{align}
for which the exact solution is $u(x)=\sin(x)$.
\end{exmp}

In order to compare and see the effectiveness of the automation and accuracy of the
unified explicit forms, the problem is solved first by using the well-known central difference formula of order 2 for the second derivative and then by using the
highest order compact finite difference formulas that use the full set of nodal values of the solution vector.

Table \ref{tab:Maximum Errors} shows the maximum absolute errors for the central difference method (\ref{eqCentralDifferenceEq}) and the proposed method in (\ref{eqProposedDifference}). As the order of the proposed method increases with the size of the discrete domain, precision errors occur for higher order approximations as the accuracy of the solution surpasses the machine precision--see orders 30, 62 and 126 in Table \ref{tab:Maximum Errors}.  In  order to make sure that these  errors are indeed due to accuracy overwhelming the machine precision, we computed the same methods with high precision arithmetic with 300 decimal digits accuracy. In this case, the    errors disappear and the absolute maximum errors continue to reduce.

Although we solve the resulting matrix equation by the standard classical solvers, the solution process may be efficiently performed seeing that the matrix has a centro-symmetric structure.  This has not been considered here and is thus open.

\begin{table}[ht]
	\centering\footnotesize

	\begin{tabular}{|l|l|l|r|l|l|}
		
		\hline
		&Step size&  Order 2&\multicolumn{3}{c|}{Proposed Higher Order Explicit Methods}\\
		\cline{4-6}
		$N$&$h$&Central&Order&Prec.  15 digits&Prec.  300 digits\\
		\hline
		4&0.5&2.3370e-01&2&0.00123815&0.00123815\\
		8&0.25&5.3029e-02&6&1.85125e-07&1.85125e-07\\
		16&0.125&1.2951e-02&14&5.55112e-15&2.02095e-17\\
		32&0.0625&3.2190e-03&30&4.71049&7.17654e-42\\
		64&0.03125&8.0358e-04&62&(Precision error)&4.76176e-100\\
		128&0.015625&2.0082e-04&126&(Precision error)&2.71135e-235\\
		\hline

	\end{tabular}
	\caption{Maximum errors for  Example 1: Order 2 Central difference and the proposed method }	
	\label{tab:Maximum Errors}
\end{table}

\subsection{Fractional boundary value problem}

The use of the difference approximations for fractional derivatives
is an ongoing research. Therefore, the use of the unified explicit formulation is yet to be explored. Nevertheless, we present here a simple example of fractional boundary value problem that uses the difference formula with base differential order  $d = 2$.

\begin{exmp}\label{Eq:Example2}
Consider the fractional boundary value problem
\begin{align}
D^\alpha y(x) &= \frac{\Gamma {(4+x)} }{6} x^3 , 0 < x < 1, \qquad 1<\alpha <2 \\
u(0) &= 0, \quad u(1) = 1. \nonumber
\end{align}
\end{exmp}
The exact solution is $ y(x) = x^{3+\alpha }. $

The domain $[0,1] $ is discretized with points
$ x_j = jh, j = 0,1,\cdots, N $ and the fractional derivative
is approximated by the order $ p =2 $ approximation with base order $d = 2 $
and shift $r = 1$ from Table \ref{TableGens2}. The weights $w_k $ we computed by using the J.C.P. Miller relation in (\ref{Eq:Miller}). The resulting system
\[
  BY = F
\]
  is solved after imposing the boundary conditions.

\begin{table}[ht]
	\centering\footnotesize

	\begin{tabular}{|l|l|l|r|l|l|l|}
		
		\hline
		& \multicolumn{2}{c|}{$\alpha=1.33 $} &  \multicolumn{2}{c|}{$\alpha=1.34 $}
&  \multicolumn{2}{c|}{$\alpha=1.6 $} \\
		\cline{2-7}
		$N$ & Error & Order & Error & Order & Error & Order \\
		\hline
		8   &  2.7809e-02 &  -- & 2.7809e-02 & -- & 1.7798e-02 & -- \\
		16  &  6.1767e-03 &  2.1707e+00  & 5.7018e-03 & 2.1871e+00 &4.4935e-03  & 7.0856e-05\\
		32  &  1.4568e-03 & 2.0840e+00  & 1.3175e-03  & 2.1136e+00  & 1.1292e-03 & 1.9925e+00\\
		64  &  3.5663e-04 & 2.0303e+00  & 3.1401e-04  & 2.0690e+00 &2.8309e-04  &1.9960e+00 \\
		128 &  9.3875e-05 & 1.9256e+00  & 7.6700e-05  & 2.0335e+00 & 7.0856e-05 & 1.9983e+00\\
        256 &  5.7965e-05 & 6.9557e-01  & 1.9031e-05  & 2.0109e+00 & 1.7725e-05 & 1.9991e+00\\
        512 &  1.6220e-05 & 1.8374e+00  & 4.7521e-06  & 2.0017e+00 & 4.4327e-06 & 1.9996e+00 \\
        1024&  3.7781e-06 & 2.1021e+00  & 1.1880e-06  & 2.0001e+00 & 1.1083e-06 & 1.9998e+00 \\
        2048&  1.3291e-06 & 1.5072e+00  & 2.9696e-07  & 2.0002e+00 & 2.7710e-07 & 1.9999e+00 \\
        4096&  1.8479e+05 & -3.7017e+01 & 7.4056e-08  & 2.0036e+00 & 6.9267e-08 & 2.0002e+00 \\
		\hline
	\end{tabular}
	\caption{Maximum errors and convergence orders for  Example 2 using the unified explicit formula with $d = 2, p = 2$}	
	\label{tab:Example2}
\end{table}

Table \ref{tab:Example2} lists the maximum errors and convergence orders for $\alpha = 1.33,1.34, 1.6 $.

The order 2 accuracy is obtained for $  \alpha >4/3  $   which has been confirmed by
numerical tests with more $\alpha $ values and can be justified
by the convergence of the series of the generator as follows.
The generator $W_{2,2}(z)$ for $ p = 2, d =2 $ is given by (see Table \ref{TableGens2})
\[
W_{2,2}(z) = (\beta_0 +\beta_1 z + \beta_2 z^2 + \beta_3 z^3 )^{\alpha/2}
= (1-z)^\alpha (\beta_0 + \beta_3 z)^{\alpha/2},
\]
where $\beta_0 = 2 - \lambda , \beta_1 = -5 + 3\lambda ,\beta_2 = 4- 3\lambda, \beta_3 = -1 + \lambda$ with $\lambda = rd/\alpha = 2r/\alpha $.
The series expansion   of $W_{2,2}(z)$ converges when the series of both factors
$ (1-z)^\alpha $ and $ (\beta_0+\beta_3 z)^{\alpha/2} $  converges -- that is, when $ |z| \le 1 $ and $ |\beta_3/\beta_0 z| < 1 $.

This gives
$
|\beta_3/\beta_0 |   < 1 $ and resolves to  $ 4r/3 < \alpha $.
Moreover, for real valued expansion of the second factor, it requires $\beta_0 > 0$ which leads to  $ r < \alpha  $.  For $ r = 1 $, these give  $ \alpha > 4/3.$

Further properties  and analysis of the higher order approximation formulas for fractional derivatives are open to be considered.

\section{Conclusion}\label{SecConclusion}
A unified explicit form  for the difference formulas for classical and fractional order derivatives was presented. This formulation gives various finite difference formulas such as left, right, central, shifted, compact and staggered finite difference formulas of an arbitrary order for classical derivatives. The formulation also gives Gr\"unwald type difference approximation of any order for fractional derivatives.

From this  unified formulation, some new type of difference formulas for fractional derivatives were also obtained.

Basic examples of applications were presented to demonstrate the effect of these difference formulas for classical and fractional boundary value problems.
More properties, analysis and applications of these formulas are open for research.

We believe that the unified explicit formula will contribute to the automation of solution process of differential problems with minimal manual intervention.

\appendix

\section{Vandemonde type Determinants}\label{App:Vander}

In this appendix, we establish the determinant of the
matrix obtained in Section \ref{SecUnfiedExplictForm}.

\begin{lem}
It is known that, for a finite sequence of $q+1$ parameters  $x_0, x_1, \cdots, x_q$, the determinant of the   Vandermonde matrix $V(x_0, x_1, \cdots, x_q)$
  of size $(q+1)$  is given by
  \[
  |V|=|V(x_0, x_1, \cdots, x_q)| = \left|
  \begin{array}{ccccc}
    1   & 1   & 1   & \cdots  & 1 \\
    x_0 & x_1 & x_2 & \cdots  & x_q \\
    x_0^2 & x_1^2 & x_2^2  & \cdots & x_q^2 \\
    \cdots & \cdots & \cdots  & \cdots & \cdots \\
    x_0^q & x_1^q & x_2^q &  \cdots & x_q^q
  \end{array} \right|
  = \prod_{0\le i < j \le q} (x_j - x_i).
  \]
\end{lem}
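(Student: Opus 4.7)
The plan is to prove the formula by induction on $q$, reducing each step to a smaller Vandermonde determinant via elementary row operations.

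For the base case $q = 0$, the matrix is $[1]$, so $|V| = 1$, matching the empty product on the right-hand side. For the inductive step, I would eliminate the contribution of $x_0$ from all rows below the first by replacing row $k$ with row $k$ minus $x_0$ times row $k-1$, processing rows from bottom to top for $k = q, q-1, \ldots, 1$ (bottom-up so that each operation uses a row not yet modified). The new $(k,j)$ entry becomes $x_j^{k} - x_0\, x_j^{k-1} = x_j^{k-1}(x_j - x_0)$, and in particular the first column collapses to $(1, 0, 0, \ldots, 0)^{T}$ since $x_0^{k} - x_0 \cdot x_0^{k-1} = 0$ for every $k \ge 1$.

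Expanding along the first column then leaves a $q \times q$ determinant whose $(k,j)$ entry is $x_j^{k-1}(x_j - x_0)$ for $k = 1, \ldots, q$ and $j = 1, \ldots, q$. Pulling the common factor $(x_j - x_0)$ out of each column $j$ contributes $\prod_{j=1}^{q}(x_j - x_0)$ and leaves exactly the smaller Vandermonde matrix $V(x_1, \ldots, x_q)$, to which the induction hypothesis applies and yields $\prod_{1 \le i < j \le q}(x_j - x_i)$. Combining the two products gives $\prod_{0 \le i < j \le q}(x_j - x_i)$, as required.

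Since this is a classical identity, no single step is genuinely difficult; the main care required is in the bookkeeping of the row operations, making sure that no row swaps are introduced (so no sign change appears) and that the subtractions are performed from the bottom up, so that each operation acts on an untouched predecessor row. An alternative route, which avoids induction entirely, is to regard $|V|$ as a polynomial in the variables $x_0, \ldots, x_q$: because the determinant vanishes whenever $x_i = x_j$, each factor $(x_j - x_i)$ with $i < j$ divides $|V|$, and comparing the total degree $0 + 1 + \cdots + q = \binom{q+1}{2}$ on both sides together with the coefficient of the monomial $x_1 x_2^2 \cdots x_q^{q}$ (read off as the diagonal product) pins the proportionality constant to one.
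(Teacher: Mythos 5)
Your proof is correct. Note, however, that the paper itself offers no proof of this lemma at all: it is stated with the preamble ``It is known that'' and used as a classical fact, so there is no argument in the source to compare against step by step. Your primary route --- induction on $q$ via the bottom-up row operations $R_k \leftarrow R_k - x_0 R_{k-1}$, collapsing the first column to $(1,0,\dots,0)^{T}$, extracting $(x_j-x_0)$ from each remaining column, and invoking the inductive hypothesis on $V(x_1,\dots,x_q)$ --- is the standard textbook derivation and is carried out with the right care (no row swaps, hence no sign changes, and each operation uses an as-yet-unmodified row). Your alternative argument, treating $|V|$ as a polynomial that vanishes on each hyperplane $x_i=x_j$, matching total degrees, and pinning the constant by the coefficient of $x_1x_2^2\cdots x_q^q$, is worth highlighting for a different reason: it is precisely the technique the paper \emph{does} use to prove the neighbouring Lemma~2 on the variant determinant $|U_k|$ (divisibility by the Vandermonde factors, a degree count, and normalisation of the remaining homogeneous polynomial). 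So while the paper leaves this particular lemma unproved, your second argument is the one most consonant with the appendix's own methods, and either of your two arguments would serve as a valid self-contained proof.
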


\begin{lem}
Let a variant of the Vandermonde matrix with $q+1$ parameters
be defined as
\[
U_k(x_0, x_1, \cdots, x_q) =
\left[
  \begin{array}{cccc}
    1   & 1     & \cdots &  1 \\
    x_0 & x_1 &   \cdots & x_q \\
    x_0^2 & x_1^2 &   \cdots & x_q^2 \\
    \cdots & \cdots  & \cdots & \cdots \\
    x_0^{k-1} & x_1^{k-1} &   \cdots & x_q^{k-1}\\
    x_0^{k+1} & x_1^{k+1} &   \cdots & x_q^{k+1}\\

    \cdots & \cdots  & \cdots & \cdots \\
    x_0^{q+1} & x_1^{q+1} &   \cdots & x_q^{q+1}
  \end{array} \right],
\]
where, from the $k$-th row onward, the powers of the parameters are increased by one.
Then, the determinant of $U_k$, which is of size $(q+1)$, is given by
  \begin{equation}\label{VVendermode}
  |U_k|
  = \prod_{0\le i < j \le q} (x_j - x_i)
  \left(  \sum_{\sigma \in B} \prod_{l\in \sigma} x_l \right)
  = |V| \left(  \sum_{\sigma \in B} \prod_{l\in \sigma} x_l \right),
  \end{equation}
  where $B$ is the set of all combinations of  $q+1-k$ elements chosen
  from  the index set $ A = \{0,1,2,\cdots,q\}$  given by
  $B = \{  \sigma = (m_1,m_2,\cdots, m_{q+1-k}) : m_k \in A \}$.
\end{lem}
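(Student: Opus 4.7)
The plan is to embed $U_k$ as a cofactor of a standard Vandermonde matrix on one additional parameter, and then extract the identity by matching coefficients of a polynomial in that new parameter. Concretely, I would introduce an auxiliary indeterminate $y$ and form the $(q+2) \times (q+2)$ matrix
\[
W = \begin{pmatrix}
1 & 1 & \cdots & 1 & 1 \\
x_0 & x_1 & \cdots & x_q & y \\
x_0^2 & x_1^2 & \cdots & x_q^2 & y^2 \\
\vdots & \vdots & & \vdots & \vdots \\
x_0^{q+1} & x_1^{q+1} & \cdots & x_q^{q+1} & y^{q+1}
\end{pmatrix},
\]
which is a standard Vandermonde matrix on the $q+2$ parameters $x_0,\ldots,x_q,y$. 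By the preceding lemma,
\[
|W| \;=\; \prod_{0\le i<j\le q}(x_j - x_i)\cdot \prod_{i=0}^q (y - x_i) \;=\; |V|\cdot \prod_{i=0}^q (y - x_i),
\]
which is already a known polynomial of degree $q+1$ in $y$.

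Next I would evaluate $|W|$ a second way, by Laplace expansion along its last column. The entry in row $i+1$ of that column is $y^i$, and the minor obtained by deleting row $i+1$ and the last column is precisely the $(q+1) \times (q+1)$ matrix on the parameters $x_0,\ldots,x_q$ whose rows carry powers $0,1,\ldots,i-1,i+1,\ldots,q+1$, i.e.\ $U_i$. With cofactor sign $(-1)^{(i+1)+(q+2)} = (-1)^{q+1-i}$, this yields
\[
|W| \;=\; \sum_{i=0}^{q+1}(-1)^{q+1-i}\,|U_i|\, y^i.
\]
Equating this with the expansion $\prod_{i=0}^q (y-x_i) = \sum_{i=0}^{q+1}(-1)^{q+1-i}\,e_{q+1-i}(x_0,\ldots,x_q)\,y^i$, where $e_m$ denotes the $m$-th elementary symmetric polynomial, and reading off the coefficient of $y^k$ in the two expressions for $|W|$, gives $|U_k| = |V|\cdot e_{q+1-k}(x_0,\ldots,x_q)$. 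Since $e_{q+1-k}(x_0,\ldots,x_q) = \sum_{\sigma \in B}\prod_{l \in \sigma} x_l$ by the very definition of the elementary symmetric polynomial with $B$ running over $(q+1-k)$-element subsets of $A=\{0,1,\ldots,q\}$, this is exactly (\ref{VVendermode}).

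The only delicate point I anticipate is sign bookkeeping: both the cofactor expansion and the expansion of $\prod_{i=0}^q(y - x_i)$ contribute a factor $(-1)^{q+1-i}$, and these must cancel on the nose so that $|U_k|$ and $|V|\,e_{q+1-k}$ come out with the same sign. This is guaranteed by the specific placement of the new column and the ordering of powers $0,1,\ldots,q+1$ in $W$, and once the alignment is verified the remainder of the argument reduces to two direct applications of the standard Vandermonde formula, so no computational obstruction is expected.
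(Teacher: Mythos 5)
Your proof is correct, and it takes a genuinely different route from the one in the paper. The paper argues directly on $|U_k|$: it observes that the determinant vanishes when $x_i = x_j$, so $|V|$ divides it; a degree count shows the quotient $P$ is homogeneous of degree $q+1-k$; and it then identifies $P$ with the elementary symmetric polynomial $e_{q+1-k}$ and fixes the constant to $1$ by matching a single term. You instead adjoin an auxiliary parameter $y$, recognize the bordered matrix $W$ as an ordinary Vandermonde matrix in $q+2$ parameters, and obtain every $|U_i|$ at once by comparing the cofactor expansion of $|W|$ along the last column with the expansion of $|V|\prod_{i=0}^{q}(y-x_i)$ in powers of $y$; your sign check $(-1)^{(i+1)+(q+2)}=(-1)^{q+1-i}$ is right, and the two alternating signs cancel exactly as you anticipate. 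Your generating-function argument buys something concrete: the paper's step asserting that ``homogeneity forces the constant coefficients of the product terms to be the same'' really also needs the symmetry of the quotient $|U_k|/|V|$ under permutations of the $x_i$ (both determinants are alternating, so the ratio is symmetric), a point the paper leaves implicit; in your derivation the elementary symmetric polynomial falls out of $\prod_{i=0}^{q}(y-x_i)$ with its coefficient already normalized, so no such appeal is needed. The paper's approach, on the other hand, is self-contained for a single fixed $k$ and does not require introducing an extra indeterminate. Both are valid; yours is arguably the tighter argument.
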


\begin{proof}
  It is clear that $|U_k|$ is a homogeneous polynomial of total degree $ 1+2+\cdots+(k-1)+(k+1)+\cdots+ (q+1) = (q+1)(q+2)/2 - k$.
  Moreover,
  for $i\ne j$, by replacing $x_i$ with $x_j$, we have $|U_k|=0$ and thus $(x_i - x_j) $ are factors of $|U_k|$. There are $q(q+1)/2$ such factors as in the Vandermonde determinant $|V|$. Hence,
  \[
  |U_k|= \prod_{0\le i < j \le q} (x_j - x_i) P(x_0, x_1, \cdots, x_q),
  \] where $P$ is a homogeneous polynomial of total degree
  $(q+1)(q+2)/2 -k - q(q+1)/2 = q + 1 - k$.

  Since there are $q+1$ parameters,
  $ P $ will have
  $ \left(\begin{array}{c} q+1 \\ q+1 -k \end{array} \right) $
  terms of products
  of combinations of $q+1-k$  elements chosen from the $q+1$ parameters. Thus, $P$ has the form
  \[
  P = C \left(  \sum_{\sigma \in A } \prod_{l\in\sigma} x_l \right).
   \]
  The homogeneity  of $P$ forces the constant coefficients of the product terms to be the same.
  Equating one of the terms of $|U_k|$, we see that $C=1$.
\end{proof}

\begin{lem}
The determinant $ | V_j( {\bf b})| $  in (\ref{eqBetabyCramerRule}) of Theorem \ref{ThmBetaSolution} is given by
\[
|V_{j}(\mathbf{d})| = (-1)^{j+d}d!\prod_{\begin{smallmatrix}
	m>n\\ m,n \ne j
	\end{smallmatrix}}(\lambda_{m}-\lambda_{n})N_j,
\]
\end{lem}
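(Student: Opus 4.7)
The plan is to evaluate $|V_j(\mathbf{d})|$ in two steps: first reduce it to a smaller determinant via cofactor expansion, then identify that smaller determinant with the variant $U_d$ handled by the preceding Vandermonde-type lemma.

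First, I would expand along the $j$-th column of $V_j(\mathbf{d})$. By construction this column equals $\mathbf{d}$, whose only non-zero entry is $d!$ in row $d$. Cofactor expansion therefore yields $|V_j(\mathbf{d})| = (-1)^{j+d}\, d!\, |V_{j,d}|$, where $V_{j,d}$ is the $(N-1)\times(N-1)$ submatrix obtained by removing both column $j$ and row $d$ from $V$. This already accounts for the sign $(-1)^{j+d}$ and the factor $d!$ in the claimed formula.

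Next, I would recognize the structure of $V_{j,d}$: its columns are indexed by the parameters $\{\lambda_k : 0 \le k \le N-1,\ k \ne j\}$, and its rows correspond to the powers $0, 1, \ldots, d-1, d+1, \ldots, N-1$. After relabeling the $N-1$ retained parameters, this is precisely the modified Vandermonde matrix $U_d$ from the second appendix lemma, with $q+1 = N-1$ and missing-row index $k = d$. Applying that lemma gives
\[
|V_{j,d}| = \prod_{\substack{m > n \\ m,n\ne j}}(\lambda_m - \lambda_n)\left( \sum_{\sigma \in B} \prod_{l \in \sigma} \lambda_l \right),
\]
where $B$ is the family of $(q+1-k)$-element subsets of $\{0,\ldots,N-1\}\setminus\{j\}$. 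Since $N = p+d$, the cardinality $q+1-k = (N-1)-d = p-1$ matches exactly the $(p-1)$-fold products appearing in the definition of $N_j$.

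Finally, substituting $\lambda_l = \lambda - l$ turns the inner symmetric-polynomial sum into $\sum_{0\le m_1 < \cdots < m_{p-1} \le N-1,\ m_i\ne j}\ \prod_k(\lambda - m_k)$, which is exactly $N_j$ as given in (\ref{Eq_NjDj}); combining this with the cofactor expansion yields the claimed identity. The only real subtlety is bookkeeping: one must verify that the degree count $(N-1) - d = p - 1$ exactly matches the cardinality of the index sets in $N_j$, and that the cofactor sign $(-1)^{j+d}$ is consistent with the zero-indexed row/column removal conventions used throughout. Beyond that, the result is a direct consequence of the two appendix lemmas on Vandermonde-type determinants.
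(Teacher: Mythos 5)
Your proposal is correct and follows essentially the same route as the paper: cofactor expansion along the $j$-th column to extract $(-1)^{j+d}d!$, identification of the remaining minor with the variant Vandermonde determinant $U_d$ from the appendix lemma, and recognition of the resulting elementary symmetric sum in the parameters $\lambda_l=\lambda-l$ (with $j$ excluded) as $N_j$, using the count $(N-1)-d=p-1$. No gaps.
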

where $N_j$ is given in (\ref{Eq_NjDj}).

\begin{proof}
Since the matrix $ V_{j}(\mathbf{d}) $ is obtained from the Vandermonde matrix $V$ by replacing the $ j^{th} $ column by the vector $ {\bf d} $, the parameter $ \lambda_j $ is removed from the list $ \{ \lambda_0, \lambda_2, \cdots, \lambda_{N-1}\}.$
Performing the determinant evaluation with the $j^{th} $ column, we have
\[
|V_{j}(\mathbf{d})| = (-1)^{j+d}d!
\left|
  \begin{array}{cccc}
    1   & 1     & \cdots &  1 \\
    \lambda_0 & \lambda_1 &   \cdots & \lambda_q \\
    \lambda_0^2 & \lambda_1^2 &   \cdots & \lambda_q^2 \\
    \cdots & \cdots  & \cdots & \cdots \\
    \lambda_0^{d-1} & \lambda_1^{d-1} &   \cdots & \lambda_q^{d-1}\\
    \lambda_0^{d+1} & \lambda_1^{d+1} &   \cdots & \lambda_q^{d+1}\\

    \cdots & \cdots  & \cdots & \cdots \\
    \lambda_0^{q+1} & \lambda_1^{q+1} &   \cdots & \lambda_q^{q+1}
  \end{array} \right|.
\]
The determinant is in the form of the variant determinant in Lemma 2. Hence, we have
\begin{align*}
|V_{j}(\mathbf{d})| &= (-1)^{j+d}d! |V| \left(  \sum_{\sigma \in B} \prod_{l\in \sigma} \lambda_l \right) \\
&= (-1)^{j+d}d! |V| \sum_{\begin{smallmatrix}
		0\leq m_1<m_2<\cdots<m_{p-1}\leq N-1\\m_i \ne j, 1\le i \le p-1
		\end{smallmatrix}} \prod_{k=0}^{p-1}\lambda_{m_k} ,
\end{align*}
where $\sigma$ are the combinations of $ N-1-d= p-1 $ elements from the $N-1$ parameter set
$A_j = \{ \lambda_m : 0\le m \le N-1  \}  \setminus \{\lambda_j \} $.

Now, the last sum-product term is $N_j$  and
 $|V| = \prod_{\begin{smallmatrix}
	m>n\\ m,n \ne j
	\end{smallmatrix}}(\lambda_{m}-\lambda_{n}) $  ends the proof.
\end{proof}

\section{Elementary symmetric polynomials}\label{App:ESP}
The elementary symmetric polynomials  on a finite set $X$ of parameters are defined as follows:

\begin{definition}
Let $X = \{x_0,x_1,\cdots, x_{N-1} \} $ be a set of $N$ parameters and $B_k$ be the set of all combinations $\sigma $  of  $k$ parameters chosen from $X$. Then
\[
S(X,k) = \sum_{\sigma \in B_k} \prod_{l \in \sigma} x_l
= \sum_{0 \le i_1 < i_2 <\cdots <i_k \le N-1} x_{i_1}x_{i_2}\cdots x_{i_k}.
\]
\end{definition}

It is interesting to note that the monomial $$L(x) = \prod_{k=0}^{N-1} (x + x_k) $$ has the expanded form given by
$$
L(x) = \sum_{k=0}^{N} S(X,k)x^{N-k}
$$
with $S(X,0) = 1$.
The elementary symmetric polynomials can thus be obtained from the coefficients of powers of the expanded $L(x)$.

The product in $L(x) $ can be performed recursively by multiplying  the factors $(x+x_k)$ successively as follows:
\begin{enumerate}
\item Let $L_{-1}(x) = 1$.

\item Define $L_k(x) = L_{k-1}(x)(x + x_k), k = 0,1,\cdots, N-1$.

\item If $L_{k-1}(x) = p_0 + p_1 x  +\cdots + p_{k-1} x^{k-1} $ and
$L_k(x)  = q_0 + q_1 x  + \cdots + q_{k} x^{k}$, then,
 by setting
$p_{-1} = p_{k} = 0$, the coefficients $q_m $ are related by
\begin{equation}\label{addfactor}
q_m = p_{m-1} + x_k p_{m}, \quad  m = 0,1,2 \cdots ,k.
\end{equation}
\end{enumerate}
Conversely, one can obtain $L_{k-1}(x)$ from  $L_k(x)$ through
\begin{equation}\label{removefactor}
p_{m-1} = q_{m} - x_k p_{m}, \quad  m = k,k-1, \cdots ,2,1,0,
\end{equation}
where $q_{-1}$ is the remainder when dividing $L_k(x)$ by $(x+x_k)$ which is zero since $(x+x_k)$
is a factor of $L_k(x)$.  Hence, the recursion can be computed only for
$m = k,k-1, \cdots ,2,1$.

\bibliographystyle{elsarticle-num-names}

\bibliography{NasirNafabib}

\end{document}